\documentclass[12pt,reqno]{amsart}
\usepackage{amsfonts,amssymb,hyperref,amsthm,enumerate,color,stmaryrd,color}
\usepackage{amssymb,hyperref,stmaryrd,enumerate,pgf,tikz,color}
\usepackage[left=2.6cm,right=2.6cm,top=3cm,bottom=3cm,bindingoffset=0cm]{geometry}
\newtheorem{thm}{Theorem}[section]
\newtheorem{lem}[thm]{Lemma}
\newtheorem{prop}[thm]{Proposition}
\newtheorem{cor}[thm]{Corollary}
\newtheorem*{nota}{Notation}
\def\B{\mathcal{B}}
\def\P{\mathsf{Pet}}
\def\F{\mathcal{F}}
\def\G{\Gamma}
\def\la{\langle}
\def\ra{\rangle}
\def\Z{\mathbb{Z}}
\DeclareMathOperator{\aut}{Aut}
\DeclareMathOperator{\cay}{Cay}
\DeclareMathOperator{\cosg}{Cos}
\DeclareMathOperator{\soc}{soc}
\DeclareMathOperator{\sym}{Sym}
\begin{document}
\title[On certain edge-transitive bicirculants of twice odd order]
{On certain edge-transitive bicirculants of twice odd order}
\author[I.~Kov\'acs]{Istv\'an~Kov\'acs$^{\, 1,2}$}
\address{I.~Kov\'acs 
\newline\indent
UP IAM, University of Primorska, Muzejski trg 2, SI-6000 Koper, Slovenia 
\newline\indent
UP FAMNIT, University of Primorska, Glagol\v jaska ulica 8, SI-6000 Koper, Slovenia}
\email{istvan.kovacs@upr.si}
\author[J.~Ruff]{J\'anos Ruff$^{\, 2}$}
\address{J.~Ruff 
\newline\indent
University of P\'ecs, Institute of Mathematics and Informatics, Ifj\'us\'ag \'utja 6, 
H-7624 P\'ecs, Hungary} 
\email{ruffjanos@gmail.com}
\thanks{$^1$~Partially supported by the Slovenian Research Agency (research program P1-0285, research projects N1-0062, J1-9108, J1-1695, J1-2451 and N1-0208).
\newline\indent
$^2$~Partially supported by the ARRS-NKFIH Slovenian-Hungarian Joint Research Project, 
grant no.~SNN 132625 (in Hungary) and N1-0140 (in Slovenia). }
\keywords{bicirculant, edge-transitive, primitive permutation group}
\subjclass[2010]{05C25, 20B25}
\maketitle
\begin{abstract}
A graph admitting an automorphism with two orbits of the same length is called 
a bicirculant. Recently, Jajcay et al.~initiated the investigation of the 
edge-transitive bicirculants with the properties that one of the subgraphs induced 
by the latter orbits is a cycle and the valence is at least $6$~
(Electron.~J.~Combin., 2019). 
We show that the complement of the Petersen graph is the only such graph whose 
order is twice an odd number. 
\end{abstract}
\section{Introduction}\label{sec:intro}
All groups and graphs in this paper will be finite.
A graph admitting an automorphism with two orbits of the same length is called 
a {\em bicirculant}.  
The symmetry properties of bicirculants have attracted 
considerable attention (see, e.g., \cite{AHK,CZFZ,DGJ,KKMW,MMSF,MP,P,ZZ}). 
Recently, Jajcay et al.~\cite{JMSV} initiated the investigation of the edge-transitive bicirculants with the properties that one of the subgraphs induced 
by the latter orbits is a cycle and the valence is at least $6$. 
Motivated by this, we set the following notation. 

\begin{nota}
For a positive integer $d \ge 3$, denote by 
$\F(d)$ the family of regular graphs having valence $d$ and admitting an automorphism with two orbits of the same length such that one of the 
subgraphs induces by these orbits is a cycle.
\end{nota}

The graphs in the family $\F(3)$ are the well studied {\em generalised Petersen graphs},  
which were introduced by Watkins~\cite{Wa} in 1969. 
The graphs in $\F(4)$ were defined under the name   
{\em Rose Window graphs} by Wilson~\cite{Wi} and those in $\F(5)$ under the name 
{\em Taba\v{c}jn graphs} by Arroyo et al.~\cite{AHKOS}.  
The question which of these graphs are edge-transitive have been 
answered in \cite{FGW,KKM,AHKOS}. Moreover, the automorphism groups of all 
(not only the edge-transitive) graphs in the families $\F(d)$, $d=3,4,5$, are also 
known (see~\cite{FGW,KKM,DKM,KMMS}). 

Jajcay et al.~\cite{JMSV} focused primarily on the family $\F(6)$, they 
called the members of this family {\em Nest graphs} (see also~\cite{V}). 
Their main result was the classification of 
edge-transitive Nest graphs of girth $3$, the task to classify all  
edge-transitive Nest graphs was posed as \cite[Problem~1.2]{JMSV}.
Regarding the families $\F(d)$ with $d > 6$, the following questions were raised (see~\cite[Question~1.1]{JMSV}): 

\begin{enumerate}[{\rm (1)}]
\setlength{\itemsep}{0.25\baselineskip}
\item For which $d > 6$ does the family $\F(d)$ contain at least one edge-transitive graph? 
\item For which $d > 6$ does the family $\F(d)$ contain infinitely many 
edge-transitive graphs? 
\end{enumerate}

Jajcay et al.~\cite{JMSV} also carried out an exhaustive computer search for edge-transitive graphs of order at most $220$ and belonging to $\F(6)$, and also 
for edge-transitive graphs of order at most $100$ and belonging to the families 
$\F(d)$ with $7 \le d \le 10$. 
By the {\em order} of a graph we mean the number of its vertices.  
They obtained $66$ graphs in $\F(6)$ (see~\cite[Table~1]{JMSV}) and none 
in the families $\F(d)$, $7 \le d \le 10$. Among 
the $66$ graphs, only one has twice odd order, and this graph is 
the complement of the {\em Petersen graph}.  
Motivated by these observations, in this paper 
we focus on the edge-transitive graphs in the families $\F(d)$,  
$d \ge 6$, whose order is twice an odd number.  
We would like to remark that the remaining edge-transitive graphs 
in $\F(6)$ are dealt with in~\cite{K2}.
\medskip

Our main result is the following theorem.

\begin{thm}\label{main}
The family $\F(d)$ with $d > 6$ contains no edge-transitive graph of  
twice odd order. Furthermore, the complement of 
the Petersen graph is the only edge-transitive graph in family $\F(6)$ of twice odd 
order.
\end{thm}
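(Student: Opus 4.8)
The plan is to pass to the full automorphism group of a hypothetical example and reduce to a question about transitive, and ultimately primitive, permutation groups of degree twice an odd number, to be settled with the help of the classification of finite simple groups. So let $\Gamma\in\F(d)$ be edge-transitive with $d\ge 6$ and of order $2n$ with $n$ odd, let $\rho\in\aut(\Gamma)$ have two orbits $U,W$ of length $n$, and suppose the subgraph induced on $U$ is an $n$-cycle. First I would check that $\Gamma$ is connected: $\rho$ permutes the connected components, and since the subgraph on $U$ is connected, $U$ lies in one, necessarily $\rho$-invariant, component $\Delta$; if $\Delta\neq\Gamma$ then the same reasoning applied to $W$ forces $\Delta=U$, so $\Delta$ would be a $d$-regular graph equal to an $n$-cycle, impossible as $d\ge 6$. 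Since a connected edge-transitive graph that is not vertex-transitive is bipartite with the two orbits of its automorphism group as the two parts, and $\langle\rho\rangle$ already has exactly two orbits, those parts would be $U$ and $W$, contradicting the presence of edges inside $U$; hence $\Gamma$ is vertex-transitive. Finally $\rho$ acts as an $n$-cycle on $W$, so the subgraph on $W$ is regular, and comparing the two ways of counting the edges between $U$ and $W$ shows it is $2$-regular; thus each vertex has exactly two neighbours on its own side and $d-2\ge 4$ neighbours on the other.

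Now put $G=\aut(\Gamma)$ and $R=\langle\rho\rangle\cong\Z_n$, a semiregular subgroup with orbits $U,W$ whose generator has cycle type $n^2$. The decisive observation is that $\{U,W\}$ is \emph{not} a block system for $G$: the graph $\Gamma$ has edges inside $U$ (the $n$-cycle, using $n\ge 3$) and edges between $U$ and $W$ (using $d-2>0$), so if $G$ preserved $\{U,W\}$ it could not be transitive on $E(\Gamma)$. Moreover, any $G$-block system with blocks of size $n$ coincides with $\{U,W\}$, because $R$ has no nontrivial point stabiliser, so each of its orbits inside a block of size $n$ has length $n$ and the blocks are exactly the $R$-orbits. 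We are therefore reduced to the following assertion: the only transitive group $G$ of degree $2n$ with $n$ odd that contains a semiregular $\Z_n$ with two orbits, does not admit the orbit partition as a block system, and acts edge-transitively on some graph in $\F(d)$ with $d\ge 6$, is $\sym(5)$ acting on the $2$-subsets of a $5$-set, and then $\Gamma\cong\overline{\P}$ and $d=6$.

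For this I would argue by induction on the degree, through a minimal normal subgroup $M$ of $G$. If $M$ is intransitive, its orbits give a $G$-block system with blocks of size strictly between $1$ and $2n$, and one passes to the induced action on the block set; more generally, if $G$ is imprimitive one passes to a block action, so the base cases are the primitive groups. A primitive group arising here has degree either twice an odd number --- and then, since $2n$ is neither a prime power nor divisible by $4$, every O'Nan--Scott type except the almost simple one is ruled out, so $G$ is almost simple --- or an odd number, in which case $G$ contains a regular cyclic subgroup and the Burnside--Schur theory (in its form refined by the classification of finite simple groups) again leaves only explicit possibilities. Putting this together with the classification of finite simple groups produces a short list of candidates (essentially $\sym(m)$ and $A_m$ in small transitive actions, projective groups between $\mathrm{PSL}$ and $\mathrm{PGL}$ on projective points, and finitely many small exceptions); after discarding those with no element of cycle type $n^2$ and those admitting $\{U,W\}$ as a block system, one lists for each survivor its edge-transitive orbital graphs --- including unions of a non-self-paired orbital with its mate, to cover the half-arc-transitive case --- tests the defining property of $\F(d)$, and records the valence, finding that the only graph that survives is the complement of the Petersen graph, realised by $\sym(5)$ on the $2$-subsets of $\{1,\dots,5\}$, of valence $6$; in particular $\F(d)$ with $d>6$ produces nothing. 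The main obstacle is precisely this group-theoretic reduction and verification: organising the imprimitive cases cleanly --- above all a $G$-invariant perfect matching between $U$ and $W$, where one must control the resulting circulant quotient on $n$ vertices and exclude valence $\ge 6$ directly --- dealing with the case where no subgroup of $R$ of prime order is normal in $G$, and then marching through the almost simple candidates and their orbital graphs while keeping the valence under control, so as to force $d=6$ and $\Gamma\cong\overline{\P}$.
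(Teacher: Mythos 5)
Your preliminary reductions (connectivity, vertex-transitivity, the other $\rho$-orbit inducing a $2$-regular subgraph, and $\{U,W\}$ not being a block system when $n$ is odd) are correct but peripheral; the substance of your plan is the induction "pass to the action on a block system, so the base cases are the primitive groups", and this step does not work as stated. First, a quotient by an arbitrary block system need not inherit the hypotheses: it can be an $r$-cover with $r>1$, so the valence and the induced-cycle property can collapse, and when the minimal blocks meet both $C$-orbits the quotient has odd degree with the image of $C$ acting regularly, so it is a circulant and not an instance of the same statement at all. The paper's entire Section~4 exists to repair exactly this: one must show that (for $n>5$ odd) $G$ always admits a \emph{cyclic} block, i.e.\ one inside a $C$-orbit, which is proved by applying M\"uller's classification to the primitive action of the block stabiliser \emph{on} a minimal non-cyclic block, an equivalence-of-actions argument to manufacture the cyclic block, and a separate, genuinely delicate exclusion of blocks of size $2$ that uses the full classification of arc-transitive circulants. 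You name the matching case and the case where no prime-order subgroup of $R$ is normal as "obstacles", but you offer no argument for them, and they are the technical core rather than routine bookkeeping.

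Second, even granting a good quotient, your recursion does not bottom out at primitive groups in the way you need. The dangerous terminal configuration is when the quotient \emph{is} the legitimate example $\overline{\P}$: then nothing is contradicted, and one must rule out edge-transitive graphs of order $10p$ in $\F(6)$ that are normal covers of $\overline{\P}$ (with $G/P\cong A_5$ or $S_5$ and $P\cong\Z_p$, $p>5$). Your proposal's only verification device --- listing edge-transitive orbital graphs of the primitive survivors and testing membership in $\F(d)$ --- applies only when $G$ itself is primitive on the $2n$ vertices, and gives no mechanism for excluding these covers. The paper needs a separate apparatus precisely here: the coset-graph representation and the double-coset counting conditions of Lemma~\ref{cond}, the Zassenhaus splitting $G=P\rtimes L$, the identification of the vertex stabiliser with $N_L(\la(1,2,3)\ra)$, and explicit computations in $\Z_p\times A_5$ and $\Z_p\rtimes S_5$. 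Without an argument for this lifted case your induction would simply terminate "successfully" at $\overline{\P}$ while leaving infinitely many candidate orders $10p$ untreated, so the proof as proposed is incomplete.
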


The paper is organised as follows. 
Section~\ref{sec:known} contains the needed results form graph and group theory.  
The next two sections are devoted to the preparation for the proof of our main theorem. 
The main result in Section~\ref{sec:cond} is Lemma~\ref{cond}, which contains   
some necessary conditions for a graph in $\F(6)$ to be edge-transitive.  
In Section~\ref{sec:min} we analyse the 
blocks of imprimitivity for a group of automorphisms acting transitively on the edges of 
a graph from $\F(d)$. The core of our proof lies in this analysis, by which we rely on the 
classification of primitive permutations groups containing a semiregular cyclic subgroup with two orbits  (see \cite{M}), and the 
classification of arc-transitive circulants (see~\cite{K,Li,LXZ}). The proof of Theorem~\ref{main} is presented in Section~\ref{sec:proof}.
\section{Preliminaries}\label{sec:known}
\subsection{Graph theory} 
For a graph $\G$, let $V(\G)$, $E(\G)$, $A(\G)$ and $\aut(\G)$ denote   
its {\em vertex set}, {\em edge set}, {\em arc set} and {\em automorphism group}, respectively. The set of vertices adjacent with a given vertex $v$ is denoted by $\G(v)$. 

Let $G \leq \aut(\G)$ and let $v \in V(\G)$. The {\em stabiliser} of $v$ in 
$G$ is denoted by $G_v$ and the {\em orbit} of $v$ under $G$ by $v^G$.  
For a subset $B \subseteq V(\G)$, the {\em set-wise stabiliser} of $B$ in $G$ is denoted 
by $G_{\{B\}}$. 
If $G$ is transitive on $V(\G)$, then $\G$ is said to be {\em $G$-vertex-transitive},     
and $\G$ is simply called {\em vertex-transitive} when it is $\aut(\G)$-vertex-transitive. The  ({\em $G$-}){\em edge-} and ({\em $G$-}){\em arc-transitive} graphs are defined correspondingly. 

Let $\G$ be a $G$-vertex-transitive graph. 
A subset $B \subseteq V(\G)$ is called a {\em block} 
for $G$ (the term {\em block of imprimitivity} is also commonly used) if $B^g \cap B=\emptyset$ or $B^g=B$ holds for every $g \in G$. The block $B$ is 
{\em non-trivial} if $1 < |B| < |V(\G)|$, and it is {\em minimal} if 
it is non-trivial and no non-trivial block is contained properly in $B$. 
The {\em block system} induced by $B$ is the partition of $V(\G)$ consisting of 
the images $B^g$, where $g$ runs over $G$. A block system is called  
{\em normal} if it consists of the orbits of a normal subgroup of $G$. 

Let $\pi$ be an arbitrary partition of $V(\G)$. 
For a vertex $v \in V(\G)$, let $\pi(v)$ denote the class containing $v$. 
The {\em quotient graph} of $\G$ with respect to $\pi$, denoted by $\G/\pi$, is 
defined to have vertex set $\pi$, and edges 
$\{ \pi(u), \pi(v) \}$, where $\{u,v\} \in E(\G)$ such that $\pi(u) \ne \pi(v)$. If there exists a constant $r$ such that 
$$
\forall \{u,v\} \in E(\G):~ \pi(u) \ne \pi(v)~\text{and}~|\G(u) \cap \pi(v)|=r,
$$
then $\G$ is called an {\em $r$-cover} of $\G/\pi$ (our definition of an $r$-cover generalises the definition given in~\cite{DGJ}, where $\pi$ is also assumed to be a block system).
The term {\em cover} will also 
be used instead of $1$-cover. In the special case when $\pi$ is formed by the orbits of 
an intransitive normal subgroup $N \lhd \aut(\G)$, $\G/N$ will also be written for 
$\G/\pi$, and the term {\em normal $r$-cover} ({\em normal cover}, respectively) 
will also be used instead of $r$-cover (cover, respectively). 
The following properties are well-known.

\begin{prop}\label{cover}
Let $\G$ be a connected $G$-vertex- and $G$-edge-transitive graph, let $\B$ 
be a normal block system of $G$, and let $K$ be the kernel of the action of $G$ on 
$\B$. 
\begin{enumerate}[{\rm (1)}]
\item $\G$ is a normal $r$-cover of $\G/\B$, where 
$r=|\G(v) \cap B|$, $v$ is any vertex and $B$ is any block in $\B$ 
containing a neighbour of $v$.    
\item If $\G$ is a normal cover of $\G/\B$, then $\G$ and $\G/\B$ have the same valence,    
the kernel $K$ is regular on every block in $\B$, and $\G/\B$ is $G/K$-edge-transitive.  
\end{enumerate}
\end{prop}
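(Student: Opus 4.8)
The plan is to let $N \lhd G$ be a normal subgroup whose orbits constitute $\B$, and to exploit throughout that $N$ is transitive on each of its orbits while fixing every block of $\B$ setwise (so that $N \le K$). We may assume $\B$ is nontrivial. Since $G$ is edge-transitive and $N$ is normal, either every edge of $\G$ has both ends in one $N$-orbit or none does; the former possibility, together with the connectedness of $\G$, would force $\B$ to be trivial, so in fact every edge of $\G$ joins two distinct blocks, which already supplies the condition $\pi(u)\ne\pi(v)$ required of a cover.

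For part~(1), I would first fix an edge $\{u,v\}$ and put $B_1=\B(u)$, $B_2=\B(v)$. Because $N$ is transitive on $B_1$ and fixes $B_2$ setwise, $|\G(x)\cap B_2|$ is independent of $x\in B_1$, and symmetrically $|\G(y)\cap B_1|$ is independent of $y\in B_2$; counting the edges between $B_1$ and $B_2$ in the two obvious ways and using $|B_1|=|B_2|$ then yields $|\G(u)\cap B_2|=|\G(v)\cap B_1|$. Now take a second edge $\{u',v'\}$. Edge-transitivity supplies $g\in G$ with $\{u,v\}^g=\{u',v'\}$, and since $G$ permutes the blocks, $g$ restricts to a bijection from $\G(u)\cap\B(v)$ onto $\G(u')\cap\B(v')$ or onto $\G(v')\cap\B(u')$; in the latter case the symmetry just proved brings us back to $|\G(u')\cap\B(v')|$. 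Hence $|\G(u)\cap\B(v)|$ equals a fixed constant $r$ for every edge, which is exactly the assertion that $\G$ is a normal $r$-cover of $\G/\B$.

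For part~(2) I would assume $r=1$. First, distinct neighbours of a vertex $u$ lie in distinct blocks by the cover property, and none of them lies in $\B(u)$, so $v\mapsto\B(v)$ maps $\G(u)$ bijectively onto the neighbourhood of $\B(u)$ in $\G/\B$ (surjectivity using once more that $N\le K$ is transitive on $\B(u)$); since both graphs are vertex-transitive this proves they have equal valence. For the regularity of $K$ on a block $B$, note $K\supseteq N$ is transitive on $B$, so it suffices to see $K_v=1$ for $v\in B$: if $k\in K$ fixes $v$, then for each neighbour $w$ of $v$ the image $w^k$ lies in $\G(v)\cap\B(w)=\{w\}$, so $k$ fixes every neighbour of $v$, and iterating along paths and invoking connectedness forces $k=1$. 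Finally $G$ acts on $\G/\B$ as automorphisms with kernel precisely $K$, so $G/K\le\aut(\G/\B)$, and since every edge of $\G/\B$ is the image of an edge of $\G$, the edge-transitivity of $G$ on $\G$ transfers to edge-transitivity of $G/K$ on $\G/\B$.

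The bookkeeping — the two double-counting/bijection arguments — is routine; the places where the hypotheses do real work, and hence the steps I would be most careful about, are the symmetry $|\G(u)\cap\B(v)|=|\G(v)\cap\B(u)|$ in~(1), which genuinely needs the transitivity of $N$ on the blocks since $G$ is only assumed edge-transitive and not arc-transitive, and the faithfulness of $K$ on the blocks in~(2), where it is precisely the $1$-cover hypothesis together with connectedness that rules out a nontrivial kernel.
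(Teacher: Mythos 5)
Your proof is correct. Note that the paper itself offers no argument here — Proposition~\ref{cover} is stated as ``well-known'' without proof — so there is nothing to compare against beyond the standard folklore argument, which is exactly what you give. The two genuinely delicate points are handled properly: the constancy of $|\G(u)\cap\B(v)|$ over \emph{both} orientations of an edge, which cannot be read off from edge-transitivity alone and which you obtain by the double count of edges between two adjacent blocks using the normal subgroup $N\le K$ (this is precisely where normality of the block system enters); and the regularity of $K$ on blocks in part~(2), where the $1$-cover condition plus connectedness forces a block-preserving automorphism fixing a vertex to fix everything. The only cosmetic remark is that ``we may assume $\B$ is nontrivial'' deserves the one-line observation that the singleton block system makes the statement trivial, while the universal block system is implicitly excluded by the statement itself; this does not affect correctness.
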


Let $S \subset H$ be a subset of a group $H$ such 
that $1_H \notin S$, where $1_H$ denotes the identity element of $H$. 
The {\em Cayley digraph} 
$\cay(H,S)$ is defined to have vertex set $H$ and arcs $(h,sh)$, where $h \in H$ and 
$s \in S$. In the case when $S$ is inverse-closed, we regard $\cay(H,S)$ as an undirected graph and use the term {\em Cayley graph}.
It is a well-known observation (see Sabidussi~\cite{S}) that, if  
$\G$ is any graph, $v$ is any vertex, and $H \le \aut(\G)$ is a regular subgroup, then 
\begin{equation}\label{eq:cay}
\G \cong \cay(H,S),~\text{where}~S=\{x \in H : v^x  \in \G(v)\}.
\end{equation} 

The Cayley digraphs of cyclic groups are shortly called {\em circulants}. 
A recursive classification of finite arc-transitive circulants was obtained independently by Kov\'acs~\cite{K} and Li~\cite{Li}. The paper~\cite{K} also provides an explicit 
characterisation~(see \cite[Theorem~4]{K}), which was rediscovered recently by 
Li et a.~\cite{LXZ}. The characterisation presented below follows from the proof of 
\cite[Theorem~4]{K} or from \cite[Theorem~1.1]{LXZ}. In order to state this characterisation, some further definitions are 
introduced next.
 
For a finite group $H$, denote by $H^\#$ the set of all non-identity elements of $H$, 
and by $H_R$ the group of all {\em right multiplications} by the elements of $H$.
If $K \le H$, then let $[H:K]$ denote the set of {\em right $K$-cosets} in $H$. 
In the case when $K$ is a block for $\aut(\cay(H,S))$, the block system induced by $K$ is 
equal to $[H:K]$. Now, if $K \lhd H$ also holds, then the image of the action 
of $H_R$ on $[H:K]$ is regular, in particular, $\G/[H:K]$ becomes a Cayley 
graph of the group $H/K$. 

A Cayley graph $\cay(H,S)$ is called {\em normal} if 
$H_R \lhd \aut(\cay(H,S))$. Note that, if $\cay(H,S)$ is a normal 
arc-transitive Cayley graph, then $S$ is equal to an $A$-orbit for some subgroup $A \le \aut(H)$. 

\begin{thm}\label{K}
{\rm (\cite{K})} 
Let $\G=\cay(C,S)$ be a connected arc-transitive graph, where $C$ is a cyclic group 
of order $n$. Then one of the following holds.
\begin{enumerate}[{\rm (a)}]
\item $\G$ is the complete graph.
\item $\G$ is normal.
\item There exists a subgroup $1 < D < C$ such that $D$ is a block for $\aut(\G)$ and 
$\G/[C:D]$ is a connected arc-transitive circulant. Furthermore, $S$ is a union of $D$-cosets. 
\item There exist subgroups $1 < D, E < C$ such that both $D$ and $E$ are 
blocks for $\aut(\G)$, $C=D \times E$, $|D| > 3$ and  
$\gcd(|D|,|E|)=1$. Furthermore, $S=D^\# R$, where 
$R \subseteq E^\#$, $R$ is inverse-closed, $\cay(E,R)$ is connected and 
arc-transitive.
\end{enumerate}
\end{thm}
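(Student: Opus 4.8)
The plan is to translate the problem into the language of Schur rings (S-rings) over cyclic groups and then invoke their structure theory. Put $G = \aut(\G)$ and identify $V(\G)$ with $C$, so that $C_R \le G$ is regular and the stabiliser $G_1$ of the vertex $1 = 1_C$ acts on $C$. The orbits $T_0 = \{1\}, T_1, \ldots, T_k$ of $G_1$ form the basic sets of an S-ring $\mathcal{A}$ over $C$ (Schur's transitivity module), and the orbitals of $G$ are in bijection with these basic sets. Under this bijection the arc set of $\G$ is a union of orbitals, and arc-transitivity says precisely that it is a single orbital, i.e. $S$ is one basic set $T_i$; connectedness says $\langle S \rangle = C$. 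Thus the theorem becomes a structural statement about an S-ring over $C$ possessing a generating basic set $S$, and I would prove it by induction on $n = |C|$.

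The engine is the Leung--Man classification of S-rings over cyclic groups: every such S-ring is assembled from trivial (rank-$2$) S-rings and cyclotomic S-rings --- those whose basic sets are the orbits of a subgroup $A \le \aut(C)$ --- by two operations, the tensor (dot) product over a decomposition $C = D \times E$ (which for cyclic $C$ forces $\gcd(|D|,|E|) = 1$) and the generalised wreath product over a subgroup $D$. I would examine the top-level operation in this decomposition of $\mathcal{A}$ and read off the conclusion. A trivial S-ring gives $S = C^\#$ and hence the complete graph (case (a)); a cyclotomic S-ring means $S$ is an $A$-orbit, which for cyclic groups is equivalent to $C_R \lhd G$ and hence to $\G$ being a normal Cayley graph (case (b)); a genuine wreath product over $D$ makes $D$ a block of $G$, forces $S$ to be a union of $D$-cosets, and produces an arc-transitive circulant quotient $\G/[C:D]$ on the cyclic group $C/D$ (case (c)); and a genuine tensor product yields the product form of case (d).

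The delicate part, which I expect to be the main obstacle, is to separate cases (c) and (d) cleanly and to pin down the side conditions, since the raw S-ring decomposition need not be unique and the two operations can overlap. In the wreath case one must check that $S$ consists of \emph{full} $D$-cosets lying in $C \setminus D$ and that the induced quotient S-ring on $C/D$ is again the S-ring of a connected arc-transitive circulant, so that (c) feeds correctly into the induction. In the tensor case $S = S_D \times S_E$ for basic sets of the two factors, and the point is that arc-transitivity together with connectedness forces the $D$-factor to be the complete graph, i.e. $S_D = D^\#$ and $S = D^\# R$ with $R = S_E$ and $\cay(E,R)$ connected arc-transitive. One must then rule out the small cases: when $|D| \le 3$ the set $D^\#$ is already a single orbit of $\aut(D)$, so the $D$-factor is itself cyclotomic and the S-ring falls under the normal case (b); this is exactly the origin of the hypothesis $|D| > 3$. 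If, on the other hand, neither tensor factor is a complete graph, then the S-ring is not of type (d): it is either cyclotomic, landing in (b), or admits a finer decomposition treated by the induction. Thus case (d) is reserved for a genuinely non-cyclotomic complete factor $K_{|D|}$ with $|D| > 3$.

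Finally I would verify exhaustiveness and consistency of the induction: the trivial and cyclotomic S-rings are the indecomposable building blocks, every top-level operation has been assigned to one of (a)--(d), and the coprimality in (d) is forced by $C = D \times E$ being cyclic. Combining the base of the induction (small $n$, where $\G$ is complete or normal) with the recursive steps (c) and (d) then yields the stated four-case classification.
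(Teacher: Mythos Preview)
The paper does not supply its own proof of this theorem: it is quoted as a known result, with the sentence ``The characterisation presented below follows from the proof of \cite[Theorem~4]{K} or from \cite[Theorem~1.1]{LXZ}'' immediately preceding the statement. There is therefore nothing in the present paper to compare your argument against; the theorem functions here purely as an imported tool, used later in the proof of Lemma~\ref{not thin}.

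That said, your sketch is essentially the strategy of the cited source~\cite{K}: pass to the transitivity module, which is a Schur ring over $\Z_n$, and invoke the Leung--Man structure theorem to peel off the trivial, cyclotomic, generalised-wreath, and tensor cases, which become (a)--(d) respectively. The alternative reference~\cite{LXZ} (and Li's earlier~\cite{Li}) reaches the same classification by a purely permutation-group-theoretic route, analysing the action of $\aut(\G)$ on block systems without Schur rings; that approach trades the algebraic machinery for more direct use of the normal-quotient method.

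One point in your outline deserves care. In the tensor case you assert that ``arc-transitivity together with connectedness forces the $D$-factor to be the complete graph''. This is not automatic from $S=S_D\times S_E$ alone: what the Leung--Man decomposition gives you at the top level may be a tensor product of two nontrivial S-rings neither of which is rank~$2$, and then $\cay(D,S_D)$ is merely arc-transitive, not complete. The resolution, as you hint, is inductive --- apply the classification to the smaller factor and absorb non-complete factors into cases (b) or (c) --- but you should be explicit that the four-way split (a)--(d) is not the raw Leung--Man dichotomy but a post-processed version obtained after this unwinding. Your remark that $|D|\le 3$ collapses into case~(b) is correct and is exactly why the bound $|D|>3$ appears.
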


Besides the Petersen graph, two further small arc-transitive graphs will appear later. 
The {\em Clebsch graph} is obtained from the $4$ dimensional cube graph $Q_4$ by 
adding the edges connecting antipodal points; the {\em lattice graph} $L_2(4)$ is defined to have vertices the ordered pairs $(i,j)$, $1  \le i, j \le 4$, and two vertices are adjacent if and only if either their first or second coordinates are the same. 
The graph $L_2(4)$ is depicted in Figure~1. It can be easily checked that the mapping $\sigma : (i,j) \mapsto (j+1,i)$, where the addition is computed modulo $4$, 
is an automorphism of $L_2(4)$, $\sigma$ has two orbits of the same length, and one of the subgraphs induced by these orbits is a cycle. 
All these show that $L_2(4)$ is an example of an edge-transitive graph from the 
family $\F(6)$. 
 
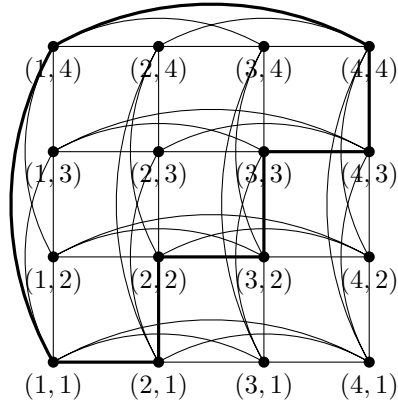
\begin{figure}[t!]
\begin{tikzpicture}[scale=0.7,line width=0.4pt]
\foreach \x in {0,2,4,6}
\draw[thin] (\x,0) -- (\x,6) (0,\x) -- (6,\x);
\foreach \x in {0,2,4} 
\draw[thin] 
(4,\x) arc (60:120:4)
(6,\x) arc (60:120:4)
(6,\x) arc (60:120:6);
\draw[thin]
(4,6) arc (60:120:4)
(6,6) arc (60:120:4);
\foreach \x in {2,4,6} 
\draw[thin] 
(\x,6) arc (150:210:6)
(\x,6) arc (150:210:4)
(\x,4) arc (150:210:4);
\draw[thin]
(0,6) arc (150:210:4)
(0,4) arc (150:210:4);
\draw[very thick] 
(0,0) -- (2,0) -- (2,2) -- (4,2) -- (4,4) -- (6,4) -- (6,6)
(6,6) arc (60:120:6)
(0,6) arc (150:210:6);
\foreach \x in {0,2,4,6}
\foreach \y in {0,2,4,6} 
\fill (\x,\y) circle (3pt);
\draw 
(0,0) node[below] {\footnotesize $(1,1)$}
(2,0) node[below] {\footnotesize $(2,1)$} 
(4,0) node[below] {\footnotesize $(3,1)$} 
(6,0) node[below] {\footnotesize $(4,1)$}
(0,2) node[below] {\footnotesize $(1,2)$}
(2,2) node[below] {\footnotesize $(2,2)$} 
(4,2) node[below] {\footnotesize $(3,2)$} 
(6,2) node[below] {\footnotesize $(4,2)$}
(0,4) node[below] {\footnotesize $(1,3)$}
(2,4) node[below] {\footnotesize $(2,3)$} 
(4,4) node[below] {\footnotesize $(3,3)$} 
(6,4) node[below] {\footnotesize $(4,3)$} 
(0,6) node[below] {\footnotesize $(1,4)$}
(2,6) node[below] {\footnotesize $(2,4)$} 
(4,6) node[below] {\footnotesize $(3,4)$} 
(6,6) node[below] {\footnotesize $(4,4)$}; 
\end{tikzpicture}
\caption{The lattice graph $L_2(4)$ and its subgraph induced by the orbit of the 
vertex $(1,1)$ under the automorphism 
$\sigma : (i,j) \mapsto (j+1,i)$, which is shown with thick lines.}
\end{figure}
\subsection{Group theory}
Our terminology and notation are standard and 
we follow the books~\cite{DM,H}. 
The {\em socle} of a group $G$, denoted by $\soc(G)$, is the subgroup generated by the set of all minimal normal subgroups (see~\cite[page~111]{DM}). 
The group $G$ is called {\em almost simple} if $\soc(G)=T$, where $T$ is a non-abelian simple 
group. In this case $G$ is embedded in $\aut(T)$ so that its socle is embedded 
via the inner automorphisms of $T$, and we also write $T \le G \le \aut(T)$. 

Our proof of Theorem~\ref{main} relies on 
the classification of primitive groups containing a cyclic subgroup with two orbits due to 
M\"uller~\cite{M}.  Here we need only the special case when the cyclic subgroup is semiregular. 

\begin{thm}\label{M}
{\rm (\cite[Theorem~3.3]{M})} 
Let $G$ be a primitive permutation group of degree $2n$ containing an element with two orbits of the same length. Then one of the following holds. 
\begin{enumerate}[{\rm (1)}] 
\item (Affine action) $G$ contains a regular normal subgroup isomorphic to $\Z_2^m$, 
where $m \in \{2,3,4\}$.\footnote{The group $G$ is from a short list, but 
as this possibility will not occur later, we omit the details.}
\item 
(Almost simple action) $G$ is an almost simple group and one of the following holds. 
\begin{enumerate}[{\rm (a)}]
\item $n \ge 3$, $\soc(G)=A_{2n}$, and 
$A_{2n} \le G \le S_{2n}$ in its natural action. 
\item $n=5$, $\soc(G)=A_5$, and $A_5 \le G \le S_5$ in its action on the set of $2$-subsets of $\{1,2,3,4,5\}$.
\item $n=(q^d-1)/2(q-1)$, $\soc(G)=PSL_d(q)$, and $PSL_d(q) \le G \le P\Gamma L_d(q)$ for some odd prime power $q$ and even number $d  \ge 2$ such that $(d,q) \ne (2,3)$.  
\item $n=6$ and $\soc(G)=G=M_{12}$.
\item $n=11$, $\soc(G)=M_{22}$, and $M_{22} \le G \le \aut(M_{22})$.
\item $n=12$ and $\soc(G)=G=M_{24}$.
\end{enumerate}
\end{enumerate}
\end{thm}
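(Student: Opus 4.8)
\medskip

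The hypothesis provides an element $g \in G$ with two orbits of equal length; as the degree is $2n$, each orbit has length $n$, the cyclic group $C = \langle g \rangle$ has order $n$, and $C$ acts semiregularly on the underlying set $\Omega$. Thus the statement is really a classification of the primitive groups of degree $2n$ possessing a semiregular cyclic subgroup of order exactly $|\Omega|/2$. The whole point is that such a subgroup is very \emph{large} relative to the degree, and my plan is to exploit this through the O'Nan--Scott theorem (see \cite{DM}) together with the classification of finite simple groups. First I would separate the O'Nan--Scott types of $G$: the \emph{affine} type, the \emph{almost simple} type, and the three ``product-type'' families whose socle is a direct power $T^k$ with $k \ge 2$ (simple diagonal, product action, and twisted wreath). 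Cases (1) and (2) of the statement correspond to the affine and almost simple types, so the task splits into eliminating the product-type families and then pinning down the survivors.

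For the product-type families the guiding principle is that a semiregular cyclic subgroup of order $|\Omega|/2$ simply cannot fit: if $\soc(G) = T^k$ with $k \ge 2$, then the degree is $|T|^{k-1}$ in the diagonal case and $|\Delta|^k$ in the product-action and twisted-wreath cases, while the order of any element is controlled coordinatewise and, together with the contribution of the permutation of the $k$ factors, stays far below half the degree once $k \ge 2$; making this quantitative with the known bounds on orders of elements (equivalently, of cyclic subgroups) in primitive groups not containing the full alternating group rules these out. In the \emph{affine} case $\soc(G) = V$ is an elementary abelian regular normal subgroup of order $p^m = 2n$, forcing $p = 2$ and $2n = 2^m$, so $C$ has order $2^{m-1}$. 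Writing a generator of $C$ as $(v,A)$ with $A$ in the irreducible point stabiliser $G_0 \le GL_m(2)$, its order is $|A|$ or $2|A|$ because $V$ has exponent $2$, so $A$ is a $2$-element of order at least $2^{m-2}$; since the order of a $2$-element of $GL_m(2)$ is at most $2^{\lceil \log_2 m \rceil}$, this bounds $m$, and the remaining small values are settled by checking directly which irreducible $G_0$ admit a semiregular affine element with exactly two orbits, yielding $m \in \{2,3,4\}$ as in case (1).

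The \emph{almost simple} case is the core, and here I would run through the possibilities for $T = \soc(G)$ family by family, each time asking which primitive actions of degree $2n$ carry a semiregular cyclic subgroup of order $n$. For $T = A_c$ alternating, the natural action of degree $c = 2n$ supplies the element consisting of two $n$-cycles and yields case (2a), the action of $A_5$ on the $2$-subsets of a $5$-set yields the degree-$10$ example (2b), and the remaining subset- and partition-actions are excluded because their degrees cannot be matched with a suitable large semiregular element. For $T$ of classical Lie type the key object is a Singer cycle: in $PSL_d(q)$ acting on the points of $PG(d-1,q)$ the Singer cycle has order $(q^d-1)/(q-1)$, and its index-$2$ subgroup is semiregular with two orbits precisely when $1 + q + \cdots + q^{d-1}$ is even, that is, when $q$ is odd and $d$ is even, which is exactly case (2c); the other classical groups and their remaining actions are eliminated by comparing the available cyclic (torus) orders, controlled via Zsigmondy primes, against the degree. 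The exceptional groups of Lie type are ruled out by the same order estimates, and for $T$ sporadic a finite check through the known maximal subgroups and class data leaves exactly $M_{12}$, $M_{22}$ and $M_{24}$ in the indicated degrees, giving (2d)--(2f).

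The main obstacle is plainly this last, almost simple analysis: it rests on the classification of finite simple groups and on detailed structural information --- Singer cycles and maximal tori, Zsigmondy prime arguments, and the explicit lists of maximal subgroups and conjugacy-class cycle types of the sporadic groups --- whereas the affine reduction and the elimination of the product-type families are comparatively uniform once the relevant element-order bounds are in hand.
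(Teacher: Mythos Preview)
The paper does not prove this theorem: it is quoted from M\"uller~\cite[Theorem~3.3]{M} as a background result in Section~\ref{sec:known}, with no argument supplied. There is therefore no proof in the paper to compare your proposal against.

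Your outline is, nonetheless, a reasonable high-level summary of how such a classification is carried out, and it mirrors the architecture of M\"uller's original argument: apply the O'Nan--Scott theorem, eliminate the diagonal, product-action and twisted-wreath types via bounds on element orders, handle the affine case by bounding the order of $2$-elements in $GL_m(2)$, and then run through the almost simple groups family by family using Singer cycles, maximal tori, Zsigmondy primes, and the explicit data for the sporadic groups. What you have written is a proof \emph{plan} rather than a proof --- each of the steps you sketch (particularly the almost simple case analysis and the exclusion of the product-type families) occupies many pages in~\cite{M}. For the purposes of the present paper none of that is needed: the authors invoke the result as a black box.
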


If $G$ is a group in one of the families (a)-(f) above, then it follows from 
\cite[Theorem~4.3B]{DM} that $\soc(G)$ is the unique 
minimal normal subgroup of $G$. Therefore, we have the following corollary.
 
\begin{cor}\label{M-p1}
Let $G$ be a primitive permutation group in one of the families (a)-(f) in part (2) of 
Theorem~\ref{M}, and let $N \lhd G$, $N \ne 1$. Then $N$ is also primitive.
\end{cor}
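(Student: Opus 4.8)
The plan is to reduce the statement to a transitivity property of $\soc(G)$. Since $G$ lies in one of the families (a)--(f), it is almost simple; set $T := \soc(G)$, a non-abelian simple group, which by \cite[Theorem~4.3B]{DM} is the unique minimal normal subgroup of $G$ (the fact recorded immediately before the statement of the corollary). First I would observe that every nontrivial normal subgroup $N \lhd G$ contains a minimal normal subgroup of $G$, so by uniqueness $T \le N$. Second, I would record the elementary fact that primitivity passes to overgroups: if $H \le K$ act on the same set and $H$ is primitive, then $K$ is transitive (because $H$ is) and every block for $K$ is a block for $H$, hence trivial, so $K$ is primitive. Combining the two observations, it suffices to prove that $T$ is primitive in each of the six cases.

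The remaining step is a case-by-case verification, and in each case this is a classical fact. In (a), $T = A_{2n}$ with $2n \ge 6$ acts naturally and is $(2n-2)$-transitive. In (c), $T = PSL_d(q)$ acts $2$-transitively on the $2n = (q^d-1)/(q-1)$ points of the projective space $PG(d-1,q)$. In (d), (e), (f), $T$ is one of the Mathieu groups $M_{12}$, $M_{22}$, $M_{24}$ acting multiply transitively on $12$, $22$, $24$ points, respectively. All of these actions are $2$-transitive, hence primitive. The only action among (a)--(f) that is not $2$-transitive is (b), where $T = A_5$ acts on the ten $2$-subsets of $\{1,2,3,4,5\}$; here I would argue directly that the stabiliser of a $2$-subset is isomorphic to $S_3$ and has order $6$, and that a subgroup of this order is maximal in $A_5$ (whose maximal subgroups have orders $12$, $10$ and $6$), so the action is primitive.

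I do not expect a genuine obstacle here: the only point needing a short argument rather than a citation is the maximality of the point stabiliser in case (b), and even that is routine.
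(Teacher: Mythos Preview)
Your argument is correct and follows the same route as the paper: use \cite[Theorem~4.3B]{DM} to see that $T=\soc(G)$ is the unique minimal normal subgroup, deduce $T\le N$, and then invoke the primitivity of $T$. The paper compresses all of this into the single sentence preceding the corollary and does not spell out the case check; the primitivity of $T$ is implicit there because in each family the lower bound $\soc(G)\le G$ shows that $T$ itself is already one of the primitive groups listed in Theorem~\ref{M}(2), so your explicit verification (including the maximal-subgroup argument in case~(b)) is a harmless elaboration rather than a different method.
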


For a transitive permutation group $G \le \sym(\Omega)$, the {\em subdegrees} of 
$G$ are the lengths of the orbits of a point stabiliser $G_\omega$, 
$\omega \in \Omega$. 
Since $G$ is transitive, it follows that the subdegrees do not depend on the choice of 
$\omega$ (see~\cite[page~72]{DM}). The number of orbits of $G_\omega$ is called 
the {\em rank} of $G$. 
The actions of a group $G$ on sets $\Omega$ and $\Omega'$ are said to be 
{\em equivalent} if there is a bijection $\varphi : \Omega \to \Omega'$ 
such that 
$$
\forall \omega \in \Omega,~\forall g \in G:~
\varphi(\omega^g)=(\varphi(\omega))^g.
$$ 

Now, suppose that $G$ is a group in one of the 
families (a)-(f) in part (2) of Theorem~\ref{M}.
If $G$ is in family (a), then the action is unique up to equivalence and $G$ is clearly 
$2$-transitive. If $G$ is in family (b), then the action is unique up to equivalence and the subdegrees are $1, 3$ and $6$. 
Let $G$ be in family (c). The semiregular cyclic subgroup of $G$ with two orbits is 
contained in a regular cyclic group, called the 
{\em Singer subgroup} of $PGL_d(q)$ (see \cite[Chapter~2, Theorem~7.3]{H}). 
In this case the action is unique up to equivalence if and only if $d=2$. 
If $d \ge 4$, then the action of $G$ is equivalent to either its natural action on the set of 
points of the projective geometry $PG_d(q)$, or to its natural action on the set of hyperplanes of $PG_d(q)$. In both actions $G$ is $2$-transitive. 
Finally, if $G$ is in the families (d)-(f), then the action is unique up to equivalence  
and $G$ is $2$-transitive (this can also be read off from~\cite{Atlas}). 
All this information is summarised in the lemma below. 

\begin{lem}\label{M-p2&3}
Let $G$ be a primitive permutation group in one of the families 
(a)-(f) in part (2) of Theorem~\ref{M}.    
\begin{enumerate}[{\rm (1)}]
\item $G$ is $2$-transitive, unless $G$ belongs to family (b). In the latter case the  subdegrees are $1, 3$ and $6$. 
\item The action of $G$ is unique up to equivalence, unless $G$ is in family 
(c) and $d \ge 4$. In the latter case $G$ admits two inequivalent 
faithful actions, namely, the natural actions on the set of points and the set of 
hyperplanes, respectively, of the projective geometry $PG_d(q)$.
\end{enumerate}
\end{lem}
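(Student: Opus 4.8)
The plan is to establish the two assertions separately, in each case running through the families (a)--(f) of part~(2) of Theorem~\ref{M} one at a time. For part~(2) it is convenient to recall the standard fact that two faithful transitive actions of $G$ are equivalent precisely when the corresponding point stabilisers are conjugate in $G$; since every action in question is primitive, its point stabiliser is a \emph{maximal} subgroup of $G$, so the claim reduces to showing that $G$ has a single conjugacy class of maximal subgroups of index $2n$ --- except in family~(c) with $d\ge 4$, where there should be exactly two.

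Families (a) and (d)--(f) are quickly disposed of. In these cases $G$ contains $A_{2n}$ (in its natural action, $2n\ge 6$) or one of $M_{12},M_{22},M_{24}$, and all of these groups are at least $2$-transitive --- $A_{2n}$ is $(2n-2)$-transitive and the three Mathieu groups are $5$-, $3$- and $5$-transitive, respectively --- so $G$ is $2$-transitive, and each of the relevant groups has, up to conjugacy, a single maximal subgroup of index $2n$, which for the symmetric and alternating groups is classical and for the Mathieu groups can be read off from~\cite{Atlas}. For family~(b) I would simply compute the suborbits: the stabiliser of the $2$-subset $\{1,2\}$ in $S_5$ is $\sym(\{1,2\})\times\sym(\{3,4,5\})$, whose orbits on the $2$-subsets of $\{1,\dots,5\}$ are $\{\{1,2\}\}$, the six $2$-subsets meeting $\{1,2\}$ in exactly one point, and the three $2$-subsets of $\{3,4,5\}$; this gives the subdegrees $1,3,6$, shows that $G$ is not $2$-transitive, and the same orbits are obtained for the index-$2$ subgroup that occurs when $G=A_5$. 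Since the stabiliser of a $2$-subset is the only maximal subgroup of $S_5$ (respectively $A_5$) of index $10$ up to conjugacy, this action is unique up to equivalence.

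The substantive case is family~(c). By hypothesis $G$ contains a semiregular cyclic subgroup $\la c\ra$ of order $n$ with two orbits, and $\la c\ra$ lies in a Singer cycle $C\le PGL_d(q)$ of order $2n=(q^d-1)/(q-1)$ (see \cite[Chapter~2, Theorem~7.3]{H}). Since $C$ is regular on the $2n$ points and also on the $2n$ hyperplanes of $PG_d(q)$, its index-$2$ subgroup $\la c\ra$ has two orbits of length $n$ in each of these two actions, so both of them realise $G$ as a primitive group of degree $2n$ of the required kind. If $d=2$ then a hyperplane of $PG_d(q)$ is a point, the two actions coincide, and the action is unique up to equivalence. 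If $d\ge 4$ the two actions are distinct, and their point stabilisers --- the stabiliser in $G$ of a point and the stabiliser in $G$ of a hyperplane --- are not conjugate in $G$, since $P\G L_d(q)\ge G$ preserves the dimension of subspaces; hence the two actions are inequivalent. To finish one must rule out any further maximal subgroup of $G$ of index $2n$, which I would do via the classification of linear groups containing a Singer cycle (equivalently, the description of the minimal faithful permutation representations of $PSL_d(q)$). I expect precisely this last step --- certifying that points and hyperplanes exhaust the primitive degree-$2n$ actions in family~(c) --- to be the main obstacle; everything else is routine bookkeeping with $2$-transitive groups and their maximal subgroups.
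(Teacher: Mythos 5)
Your argument follows essentially the same route as the paper's: Lemma~\ref{M-p2&3} is not really proved there in detail either, but assembled case by case from standard facts (2-transitivity of the natural actions in families (a) and (d)--(f), the subdegree computation $1,3,6$ in family (b), and the Singer subgroup together with the point/hyperplane description in family (c)), and your reduction of equivalence of actions to conjugacy of maximal point stabilisers is just a more explicit way of organising the same bookkeeping. The step you flag as the main obstacle --- that for $PSL_d(q)\le G\le P\Gamma L_d(q)$ with $d\ge 4$ the only subgroups of index $2n=(q^d-1)/(q-1)$ are the point and hyperplane stabilisers --- is precisely the classical fact the paper also invokes without proof; note that this identification is also what yields the $2$-transitivity of family (c) asserted in part (1), which your write-up does not address explicitly. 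One small correction: your claim that $A_{2n}$ and $S_{2n}$ have a unique conjugacy class of subgroups of index $2n$ fails at $2n=6$, where $A_6$ and $S_6$ each have two such classes interchanged by an outer automorphism, giving two inequivalent faithful degree-$6$ actions; this exceptional case affects the statement of the lemma as much as your proof, but it is harmless for its use in the proof of Lemma~\ref{cyclic}, where only the bound of at most two inequivalent faithful actions is needed.
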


The following result about $G$-arc-transitive bicirculants was proved by Devillers et al.~\cite{DGJ}, but the proof works also for the edge-transitive bicirculants as well. 
In fact, it is an easy consequence of Theorem~\ref{M}. 

\begin{prop}\label{DGJ}
{\rm (\cite[part (1) of Proposition~4.2]{DGJ})}
Let $\G$ be a $G$-edge-transitive bicirculant such that $G$ is a primitive 
group. Then $\G$ is one of the following graphs:
\begin{enumerate}[{\rm (1)}] 
\item The complete graph, and $G$ is one of the $2$-transitive groups 
described in part (2) of Theorem~\ref{M}.
\item The Petersen graph or its complement, and $A_5 \le G \le S_5$.
\item The lattice graph $L_2(4)$ or its complement, and $G$ is a rank $3$ subgroup of 
$AGL(4,2)$.
\item The Clebsch graph or its complement, and $G$ is a rank $3$ subgroup of 
$AGL(4,2)$. 
\end{enumerate}
\end{prop}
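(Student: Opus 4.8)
The plan is to apply Theorem~\ref{M} to $G$ and then to work through the cases it yields. First note that we may assume the semiregular automorphism $\rho$ of $\G$ with two orbits of equal length lies in $G$: if it does not, replace $G$ by $\langle G,\rho\rangle$, which is still primitive (any overgroup of a primitive group is primitive) and still transitive on $E(\G)$. Put $|V(\G)|=2n$. By Theorem~\ref{M}, either $G$ is almost simple and lies in one of the families (a)--(f) of part~(2), or $G$ has a regular normal subgroup $N\cong\Z_2^m$ with $m\in\{2,3,4\}$, so $2n\in\{4,8,16\}$.

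Assume first that $G$ is almost simple. By part~(1) of Lemma~\ref{M-p2&3}, if $G$ is not in family~(b) then $G$ is $2$-transitive, hence transitive on the $2$-subsets of $V(\G)$, and so $\G=K_{2n}$. If $G$ is in family~(b), then $A_5\le G\le S_5$ acts on the ten $2$-subsets of $\{1,\dots,5\}$ with subdegrees $1,3,6$; the two non-trivial suborbits are self-paired (``being disjoint'' and ``meeting in exactly one point''), so $G$ has exactly two orbits on the $2$-subsets of this set, namely the edge sets of the Petersen graph and of its complement. Since $E(\G)$ is a single $G$-orbit on $2$-subsets, $\G$ is the Petersen graph or its complement.

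Assume now that $G$ has a regular normal subgroup $N\cong\Z_2^m$. Then, by Sabidussi's theorem, $\G\cong\cay(N,S)$ with $S\subseteq N\setminus\{0\}$, and $G=N\rtimes G_0$, where the stabiliser $G_0\le GL(m,2)$ acts irreducibly on $N$ (irreducibility being equivalent to the primitivity of $G$). The automorphism $\rho$ has two orbits of length $2^{m-1}$, so it is a product of two $2^{m-1}$-cycles and has order $2^{m-1}$. If $m=2$, any non-identity element of $N$ serves as $\rho$; the primitive subgroups of $AGL(2,2)\cong S_4$ are $A_4$ and $S_4$, which are $2$-transitive, so $\G=K_4$. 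If $m=3$, then $\rho$ has order $4$, and since an element of $N$ has four orbits of length $2$ rather than two of length $4$, the linear part of $\rho$ is a non-identity element of $GL(3,2)$, so $G_0$ has even order; the only irreducible subgroup of $GL(3,2)$ of even order is $GL(3,2)$ itself, so $G=AGL(3,2)$ is $2$-transitive and $\G=K_8$.

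The case $m=4$, of degree $16$, carries the main difficulty. Here $\rho$ has order $8$; writing $\rho=(v,A)$ with $A\in GL(4,2)$ and inspecting the powers $(v,A)^k$, one sees that $\rho$ is a product of two $8$-cycles only if $A$ is a regular unipotent element of $GL(4,2)\cong A_8$ (a single Jordan block) and $v$ lies outside the image of $A-I$. Thus $G_0$ is an irreducible subgroup of $GL(4,2)$ containing a regular unipotent element. To conclude, I would run through the finitely many such subgroups --- using the classification of irreducible subgroups of $GL(4,2)$, equivalently of the affine primitive permutation groups of degree $16$ --- and, for each, determine the graphs $\cay(N,S)$ as $S$ ranges over the $G_0$-orbits on $N\setminus\{0\}$. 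The $2$-transitive groups give $\G=K_{16}$; all the others turn out to have every orbital graph among the complete graph, the Clebsch graph, the lattice graph $L_2(4)$, and the complements of the latter two --- in particular the rank-$3$ groups have $G_0$ conjugate to a copy of $S_5$ (yielding the Clebsch graph and its complement) or to a point stabiliser of $S_4\wr S_2$, isomorphic to $S_3\wr S_2$ (yielding $L_2(4)$ and its complement). The subtlety, and the step I expect to be the main obstacle, is precisely this enumeration: the irreducible subgroups of $GL(4,2)$ with a regular unipotent are more numerous than the full group together with these rank-$3$ groups, and one must check that every one of them still has all its orbital graphs on the above list. This verification is entirely finite and elementary.
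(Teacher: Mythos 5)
Your overall route---assume the semiregular element $\rho$ is available inside the primitive group (enlarging $G$ to $\langle G,\rho\rangle$ if necessary), apply Theorem~\ref{M}, and split into the almost simple and affine cases---is exactly the route the paper has in mind (it offers no proof of its own, citing \cite{DGJ} and remarking that the statement is an easy consequence of Theorem~\ref{M}), and your handling of the almost simple case and of the affine degrees $4$ and $8$ is correct. The genuine gap is that the one case which actually produces the graphs in items (3) and (4), the affine case of degree $16$, is not proved. You correctly reduce it to the assertion that every irreducible subgroup $G_0\le GL(4,2)$ containing a regular unipotent element has all of its spanning orbit graphs $\cay(\Z_2^4,S)$ among $K_{16}$, the Clebsch graph, $L_2(4)$ and their complements, but then you merely state that this ``turns out'' to hold and defer the enumeration, yourself calling it the expected main obstacle. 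That finite check is the entire mathematical content of the degree-$16$ case, so as written this is a plan rather than a proof; it is also somewhat more involved than your parenthetical remark suggests, since besides $S_5$ and $S_3\wr S_2$ there are strictly smaller relevant stabilisers (for instance the Frobenius group of order $20$ inside $\Gamma L(1,16)$ is irreducible, contains the regular unipotent field automorphism $x\mapsto x^2$, and has orbits of lengths $5$ and $10$, yielding the Clebsch graph and its complement), so the list of groups to be examined, and the verification for each, really has to be carried out.

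A second, smaller issue concerns the conclusions about $G$ itself. Replacing $G$ by $\langle G,\rho\rangle$ is harmless for identifying $\Gamma$, but items (1)--(4) are statements about the original $G$ (that $A_5\le G\le S_5$, or that $G$ is a rank~$3$ subgroup of $AGL(4,2)$), and these do not automatically descend from the overgroup; at the end one must return to $G$, e.g.\ for the Petersen graph and its complement a divisibility argument inside $S_5$ suffices, while for the degree-$16$ graphs one must exclude a primitive subgroup of $\aut(L_2(4))$ or of the automorphism group of the Clebsch graph that is edge- but not arc-transitive (hence of rank at least $4$). In the paper's applications this does not arise because the cyclic semiregular subgroup $C$ is always contained in $G$, which is also the natural reading of the hypothesis in \cite{DGJ}; if you keep the enlargement step, you should indicate how the assertions about $G$ are recovered.
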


One can easily check which of the graphs in the families (1)--(4) above 
belongs also to the family $\F(d)$ for some $d \ge 3$. 

\begin{cor}\label{prim}
Let $\G \in \F(d)$ be a $G$-edge-transitive graph for some $d \ge 3$. 
If $G$ is primitive on $V(\G)$, then $\G$ is isomorphic to $K_6$, or the Petersen graph, or its complement, or the lattice graph $L_2(4)$.  
\end{cor}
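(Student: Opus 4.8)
The plan is to read Corollary~\ref{prim} off Proposition~\ref{DGJ}. First observe that every $\G\in\F(d)$ is, by definition, a bicirculant: it admits an automorphism with two orbits of the same length. Hence, if $\G\in\F(d)$ is $G$-edge-transitive with $G$ primitive on $V(\G)$, then Proposition~\ref{DGJ} applies and $\G$ is one of: a complete graph $K_{2n}$; the Petersen graph or its complement; the lattice graph $L_2(4)$ or its complement; the Clebsch graph or its complement. The remaining task is to decide, for each of these, whether the extra condition defining $\F(d)$ holds, namely that \emph{one of the two subgraphs induced by the orbits is a cycle}.

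The positive cases and the complete graphs are immediate. Any automorphism of $K_{2n}$ with two orbits of length $n$ induces $K_n$ on each orbit, and $K_n$ is a cycle only for $n=3$; thus among complete graphs only $K_6$ qualifies, and $K_6\in\F(5)$ via $(1\,2\,3)(4\,5\,6)$. The Petersen graph is the generalised Petersen graph $\mathrm{GP}(5,2)$, hence lies in $\F(3)$. Its complement is the triangular graph $T(5)=J(5,2)$; a $5$-cycle in $S_5=\aut(T(5))$ has two orbits of length $5$ on the $2$-subsets of $\{1,\dots,5\}$, and on the orbit $\{\,\{1,2\},\{2,3\},\{3,4\},\{4,5\},\{5,1\}\,\}$ it induces a $C_5$, so the complement of the Petersen graph lies in $\F(6)$. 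Finally $L_2(4)\in\F(6)$ has already been checked in Section~\ref{sec:known}. So exactly these four graphs do occur.

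It remains to exclude $\overline{L_2(4)}$, the Clebsch graph, and its complement, which is where the real work lies. Each has $16$ vertices, so an automorphism witnessing membership in some $\F(d)$ would have two orbits of length $8$, hence act as a product of two disjoint $8$-cycles and in particular have order $8$. The complement of the Clebsch graph is dispatched without computation: it has valence $10$, so a vertex lying on an induced $C_8$ would have its remaining $10-2=8$ neighbours outside its orbit, i.e.\ in the other orbit $W$ (which has exactly $8$ vertices); thus every vertex of the orbit is joined to all of $W$ in $\overline{\mathrm{Clebsch}}$, hence to none of $W$ in the Clebsch graph, contradicting the connectedness of the latter.

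For $\overline{L_2(4)}$ and the Clebsch graph the plan is: (i) show that $\aut(L_2(4))=S_4\wr S_2$ and $\aut(\mathrm{Clebsch})=2^4{:}S_5$ each contain a \emph{single} conjugacy class of elements of order $8$; then (ii) fix a representative $g$ of that class, compute its two orbits, and check that the subgraph induced on an orbit is not a cycle. Step (ii) is routine: since $g$ cyclically permutes each orbit, the induced subgraph is a circulant on $\Z_8$, and one finds it to be $5$-regular for $L_2(4)$ and $3$-regular for the Clebsch graph, hence never a $C_8$; the same then holds for $\overline{L_2(4)}$ (and again for $\overline{\mathrm{Clebsch}}$) by complementation, since complementation preserves the automorphism group. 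The main obstacle is step (i): for $S_4\wr S_2$ one notes that the order-$8$ elements are exactly those of the form $((\sigma_1,\sigma_2),\tau)$ with $\tau$ the transposition and $\sigma_1\sigma_2$ a $4$-cycle, and a short conjugation argument collapses these to one class; for $2^4{:}S_5$ one must check that a $4$-cycle of $S_5$ acts on the natural $4$-dimensional module as a regular unipotent element and that the resulting order-$8$ elements form a single coset modulo a point-centraliser, hence a single class. Combining these steps excludes $\overline{L_2(4)}$, the Clebsch graph and its complement, leaving only $K_6$, the Petersen graph, its complement and $L_2(4)$, as claimed.
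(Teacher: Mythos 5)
Your overall route is the same as the paper's: invoke Proposition~\ref{DGJ} (every graph in $\F(d)$ is a bicirculant, so the proposition applies) and then decide, graph by graph, which members of its list lie in some $\F(d)$ --- the paper simply leaves this check to the reader. Your handling of the complete graphs (only $K_6$ survives), of the positive cases, and of $\overline{\mathrm{Clebsch}}$ via the valence/connectivity argument is correct; so is the reduction of the remaining two cases to a single conjugacy class of order-$8$ elements in $\aut(L_2(4))=S_4\wr S_2$ and in $\aut(\mathrm{Clebsch})=2^4{:}S_5$ (an automorphism with two orbits of length $8$ is indeed a product of two $8$-cycles), and the computation giving a cubic orbit-induced subgraph for the Clebsch graph.

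The flaw is in step (ii) for $\overline{L_2(4)}$. For an order-$8$ element of $S_4\wr S_2$ the subgraph of $L_2(4)$ induced on either orbit is an $8$-cycle, not a $5$-regular graph: the paper's Figure~1 exhibits exactly this for $\sigma:(i,j)\mapsto(j+1,i)$, which has order $8$, and your own single-class claim combined with your earlier remark that $L_2(4)\in\F(6)$ forces the same conclusion --- if your computation were right, $L_2(4)$ would not appear in the statement of the corollary at all. Moreover, the ``by complementation'' transfer you invoke is not sound as a principle: complementation preserves the automorphism group and the orbits, but it does not preserve the property ``the induced subgraph is not a cycle'' (a $5$-regular induced subgraph on $8$ vertices could a priori have a cycle as its complement), so excluding $\overline{L_2(4)}$ cannot be reduced to a computation in $L_2(4)$ followed by that shortcut. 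The needed exclusion is nevertheless available inside your framework once the computation is corrected: both orbits of the representative induce $C_8$ in $L_2(4)$, hence in $\overline{L_2(4)}$ each orbit induces the complement of $C_8$ inside $K_8$, which is $5$-regular and therefore not a cycle, so $\overline{L_2(4)}\notin\F(9)$. With this repair (and with the complementation remark for $\overline{\mathrm{Clebsch}}$ either dropped --- your connectivity argument already covers it --- or replaced by the explicit $4$-regular computation), your argument is complete and agrees in substance with the paper's intended verification.
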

\section{A lemma on the graphs in the family $\F(6)$}\label{sec:cond}
In this section we derive some necessary conditions for 
a graph in $\F(6)$ to be edge-transitive. Our main tool is the coset graph construction defined next. 

Let $G$ be a group, let $H$ be a core-free subgroup of $G$, and let $S$ be a subset of $G$ such that $HSH=HS^{-1}H$. By {\em core-free} we mean that $H$ contains 
no non-trivial normal subgroup of $G$.  
The {\em coset graph} $\cosg(G,H,HSH)$ is defined to have vertex set $[G:H]$ (the set of all right $H$-cosets in $G$), and edges $\{Hx,Hy\}$, where $x, y \in G$  and $yx^{-1} \in HSH$. 
The lemma below is a folklore result. For the completeness of the paper, we provide 
the reader with a proof.

\begin{lem}\label{cosrepr}
Let $\G$ be a both $G$-vertex- and $G$-edge-transitive graph. 
Write $H$ for the vertex stabiliser $\aut(\G)_v$, and 
let $g \in G$ such that $\{v,v^g\}$ is an edge. Then the mapping  
$$
\varphi : [G:H] \to V(\G),~Hx \mapsto v^x~\text{for}~x \in G
$$
is an isomorphism between $\cosg(G,H,H\{g,g^{-1}\}H)$ and $\G$.
\end{lem}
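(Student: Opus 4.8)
The plan is to verify directly that $\varphi$ is a well-defined bijection carrying the edge set of the coset graph onto the edge set of $\G$.

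First I would check that $\varphi$ is well defined and injective. If $Hx = Hy$ then $yx^{-1} \in H = \aut(\G)_v$, so $v^{yx^{-1}} = v$, hence $v^x = v^y$; this shows $\varphi$ is well defined. Conversely, if $v^x = v^y$ then $v^{yx^{-1}} = v$, so $yx^{-1} \in H$ and $Hx = Hy$, giving injectivity. Surjectivity is immediate from the $G$-vertex-transitivity of $\G$: every vertex of $\G$ is of the form $v^x$ for some $x \in G$. Since both $[G:H]$ and $V(\G)$ are finite of the same cardinality $|G:H| = |V(\G)|$, bijectivity also follows once injectivity is known.

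Next I would show $\varphi$ maps edges to edges and non-edges to non-edges. By definition $\{Hx, Hy\}$ is an edge of $\cosg(G,H,H\{g,g^{-1}\}H)$ exactly when $yx^{-1} \in H\{g,g^{-1}\}H$, i.e. $yx^{-1} = h_1 g^{\pm 1} h_2$ for some $h_1, h_2 \in H$. I claim this is equivalent to $\{v^x, v^y\}$ being an edge of $\G$. Note $\{v, v^g\} \in E(\G)$ implies $\{v, v^{g^{-1}}\} = \{v^{g^{-1}\cdot g}, v^{g^{-1}}\}^{} = \{v, v^g\}^{g^{-1}} \in E(\G)$ as well, so the condition is symmetric in $g$ and $g^{-1}$, consistently with the hypothesis $H\{g,g^{-1}\}H$ being inverse-closed in the relevant sense. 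Now, if $yx^{-1} = h_1 g h_2$, then applying $h_2^{-1}$ (which fixes $v$) to the edge $\{v, v^g\}$ and then $h_1$ gives $\{v^{h_1}, v^{g h_2^{-1}\cdot h_2 h_1}\}$; more cleanly, $\{v, v^g\} \in E(\G)$, apply $h_2 x$: since $v^{h_2} = v$ we get $\{v^x, v^{g h_2 x}\} \in E(\G)$, and $g h_2 x$ and $y$ differ by $h_1$ on the left after using $v^{h_1^{-1}} = v$ on the first coordinate — the point being that $v^y = v^{h_1 g h_2 x}$ and $v^{g h_2 x}$ lie in the same $H^{x}$-related position, so $\{v^x, v^y\} = \{v, v^g\}^{h_2 x}$ up to an element of $G$ fixing both endpoints. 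Conversely, if $\{v^x, v^y\} \in E(\G)$, then by $G$-edge-transitivity there is $z \in G$ with $\{v, v^g\}^z = \{v^x, v^y\}$, so either ($v^z = v^x$ and $v^{gz} = v^y$) or ($v^z = v^y$ and $v^{gz} = v^x$); in the first case $zx^{-1} \in H$ and $gzy^{-1} \in H$, whence $yx^{-1} = (gzy^{-1})^{-1} g (zx^{-1}) \in H g H$, and the second case gives $yx^{-1} \in H g^{-1} H$ analogously. Thus edges correspond to edges.

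I would expect the bookkeeping in the edge-preservation step to be the only mildly delicate part — one must be careful with left versus right actions and with tracking which group elements fix $v$ — but there is no genuine obstacle, since the argument is the standard Sabidussi-type correspondence (cf.~\eqref{eq:cay}) adapted to cosets. The core-freeness of $H$ is not actually needed for the isomorphism itself; it only guarantees that $G$ acts faithfully on $[G:H]$, so that $\cosg(G,H,H\{g,g^{-1}\}H)$ is an honest graph on which $G$ embeds, matching the setup. Putting the two steps together, $\varphi$ is a graph isomorphism, completing the proof.
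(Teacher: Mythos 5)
Your proof is correct and takes essentially the same route as the paper: well-definedness and bijectivity of $\varphi$, the forward edge implication resting on the identity $v^{h_1 g h_2 x}=v^{g h_2 x}$ (because $h_1$ fixes $v$), and the converse via $G$-edge-transitivity with the same two-case analysis yielding $yx^{-1}\in HgH$ or $Hg^{-1}H$. The only cosmetic differences are that you establish injectivity directly instead of counting $|G:H|=|V(\G)|$, and your forward-direction bookkeeping is phrased loosely (``up to an element fixing both endpoints'') where in fact the exact equality $\{v^x,v^y\}=\{v,v^g\}^{h_2x}$ holds.
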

\begin{proof}
We check first that $\varphi$ is indeed a mapping. 
Suppose that $Hx=Hy$ for some $x, y \in G$. Then $x=hy$ for some $h \in H$, 
and $\varphi(Hx)=v^x=v^{hy}=v^y=\varphi(Hy)$. This shows that $\varphi$ is 
well-defined. Since $\G$ is $G$-vertex-transitive, it follows that $\varphi$ is surjective. 
Using also that $|G|/|H|=|V(\G)|$, we find that $\varphi$ is a bijection. 

It remains to prove that 
$\{Hx,Hy\}$ is an edge of $\cos(G,H,H\{g,g^{-1}\}H)$ if and only 
if $\{v^x,v^y\}$ is an edge of $\G$.

Assume first that $\{Hx,Hy\}$ is an edge. By definition, $yx^{-1}=h_1gh_2$ or 
$h_1g^{-1}h_2$ for some $h_1, h_2 \in H$.  We deal only with the first case, the second 
one can be treated in the same way. Then $v^{yx^{-1}}=v^{h_1gh_2}=v^{gh_2}$, 
and as $\{v,v^{gh_2}\} \in E(\G)$, we find $\{v^x,v^y\} \in E(\G)$.

Now suppose that $\{v^x,v^y\} \in E(\G)$ for some $x, y \in G$. Since $\G$ is edge-transitive, there is some $g' \in G$, which maps the edge $\{v,v^{yx^{-1}}\}$ to 
$\{v,v^g\}$. This means that  
$$
v^{g'}=v,~v^{yx^{-1}g'}=v^g~\text{or}~v^{g'}=v^g,~v^{{yx^{-1}}g'}=v.
$$
In the first case $g' \in H$ and $yx^{-1}g'=hg$ for some $h\in H$, so 
$yx^{-1}=hg(g')^{-1} \in HgH$. In the second case $g'=h_1g$ and 
$yx^{-1}g'=h_2$ for some $h_1, h_2 \in H$, so $yx^{-1}=h_2(g')^{-1}=h_2g^{-1}h_1^{-1} \in 
Hg^{-1}H$. In either case we obtain that $\{Hx, Hy\}$ is an edge of the coset 
graph.   
\end{proof}

The valence of the graph $\cosg(G,H,H\{g,g^{-1}\}H)$ is given below.

\begin{lem}\label{LLM}
{\rm (\cite[Lemma~2.4]{LLM})}
The valence of $\cosg(G,H,H\{g,g^{-1}\}H)$ is equal to $|H|/|H \cap H^g|$ if 
$HgH=Hg^{-1}H$, or $2|H|/|H \cap H^g|$ otherwise.
\end{lem}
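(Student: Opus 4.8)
The plan is to compute the valence directly as the number of neighbours of a single vertex, for which the natural choice is the base coset $H$ (i.e.\ the image of $1_G$), and then to count the right cosets lying in the relevant double cosets.

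First I would observe that, by definition, a coset $Hy$ is adjacent to $H$ precisely when $y\cdot 1^{-1}=y \in H\{g,g^{-1}\}H = HgH \cup Hg^{-1}H$, and that this is a genuine neighbour (not a loop) exactly when $Hy \neq H$. Since $\{g,g^{-1}\}$ is inverse-closed, the symmetry condition $HSH=HS^{-1}H$ required of the coset graph holds automatically, so the adjacency is well defined. Moreover $1 \in HgH \cup Hg^{-1}H$ would force $g \in H$; as $g$ arises from an edge $\{v,v^g\}$ we have $v^g \neq v$, hence $g \notin H$, so $H$ is not among its own neighbours. Thus the valence equals the number of \emph{distinct} right cosets $Hy$ contained in $HgH \cup Hg^{-1}H$.

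Next I would count the right cosets inside a single double coset $HgH = \bigcup_{h \in H} Hgh$. Two cosets $Hgh_1$ and $Hgh_2$ coincide if and only if $gh_1h_2^{-1}g^{-1} \in H$, that is, $h_1h_2^{-1} \in H \cap g^{-1}Hg = H \cap H^g$; hence the number of distinct right cosets in $HgH$ is $|H|/|H\cap H^g|$, and likewise the number in $Hg^{-1}H$ is $|H|/|H\cap H^{g^{-1}}|$. Conjugation $x \mapsto gxg^{-1}$ restricts to a bijection from $H \cap g^{-1}Hg$ onto $gHg^{-1}\cap H = H \cap H^{g^{-1}}$, so these two indices are equal; write $m=|H|/|H\cap H^g|$ for their common value.

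Finally I would split into the two cases. Double cosets partition $G$, and every right coset $Hy$ lies in the unique double coset containing $y$. If $HgH = Hg^{-1}H$, then $HgH \cup Hg^{-1}H = HgH$ and the valence is simply $m=|H|/|H\cap H^g|$. If instead $HgH \neq Hg^{-1}H$, then the two double cosets are disjoint, so the families of right cosets they contain are disjoint as well, and the valence is $m+m = 2|H|/|H\cap H^g|$. This gives exactly the two claimed formulas. The argument is wholly elementary; the only point demanding a little care is the equality $|H\cap H^g| = |H\cap H^{g^{-1}}|$, which guarantees that the two double cosets contribute equally in the second case.
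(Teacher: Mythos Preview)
Your argument is correct. The paper itself does not prove this lemma; it merely quotes it from \cite{LLM}, so there is no in-paper proof to compare against. What you have written is the standard elementary proof: count right $H$-cosets inside $HgH$ via the stabiliser computation $Hgh_1 = Hgh_2 \iff h_1h_2^{-1} \in H \cap H^g$, observe that $|H\cap H^g|=|H\cap H^{g^{-1}}|$ by conjugation, and then split according to whether the two double cosets coincide or are disjoint.

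One small remark: your sentence ``as $g$ arises from an edge $\{v,v^g\}$ we have $v^g \neq v$, hence $g \notin H$'' imports context from Lemma~\ref{cosrepr} that is not part of the hypotheses of Lemma~\ref{LLM} as stated. The lemma is formulated for an arbitrary element $g$ of $G$, and nothing in its statement excludes $g\in H$. This does not affect your counting argument at all---if $g\in H$ then $HgH=H$, $H\cap H^g=H$, and the formula $|H|/|H\cap H^g|=1$ correctly records the single loop at $H$---so you could simply drop that sentence or replace it with the observation that the formula remains valid (interpreting valence as degree) even when $g\in H$.
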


The main result of this section is the following lemma.

\begin{lem}\label{cond}
Let $\G$ be a $G$-edge-transitive graph in $\F(6)$ of order $2n$, and let 
$H$ be a vertex stabiliser of $G$. Then $G$ contains an element $g$ 
of order $n$ satisfying one of the following sets of conditions:
\begin{enumerate}[{\rm (1)}]
\item $HgH = Hg^{-1}H$ and   
$|H|=6|H \cap H^g|=\frac{1}{2}|H\la g \ra \cap HgH|$.
\item $HgH \ne Hg^{-1}H$ and  
$|H|=3|H \cap H^g|=|H\la g \ra \cap HgH|$.
\end{enumerate}
\end{lem}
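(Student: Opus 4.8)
The plan is to realise $\G$ as a coset graph via Lemma~\ref{cosrepr} and then translate the three conditions appearing in the statement into group-theoretic identities. Since $\G \in \F(6)$ has order $2n$, by definition there is an automorphism $\rho$ of $\G$ with two orbits, each of size $n$, such that one of the induced subgraphs is an $n$-cycle. Fix a vertex $v$ on that cycle, put $H = \aut(\G)_v$ (equivalently $H = G_v$ if we work inside $G$; I would first reduce to the case $G = \aut(\G)$, or simply note $H$ denotes a vertex stabiliser of $G$ as in the statement), and let $g = \rho$, so $g$ has order $n$ and $H$ is core-free because $\G$ is connected (vertex-transitivity forces the core of $H$ to be trivial). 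The key elementary fact is that the cyclic group $\la g \ra$ acts regularly on the orbit $v^{\la g \ra}$ of size $n$, and that this orbit together with the edge set of the induced $n$-cycle exhibits $v^{\la g \ra}$ as a circulant $\cay(\la g \ra, \{g^{\pm 1}\}\cdot(\text{something}))$; more precisely the cycle structure tells us exactly which $\la g \ra$-translates of $v$ are adjacent to $v$ within the orbit.

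The core computation is to count neighbours of $v$. Since $\G$ is $6$-regular, $v$ has $6$ neighbours. Two of these lie on the $n$-cycle inside $v^{\la g \ra}$ (its two cycle-neighbours), and I would argue that the remaining $4$ lie in the other orbit; this uses that one orbit induces a cycle (valence $2$ there) so the edges leaving that orbit account for $6-2 = 4$ neighbours, and edge-transitivity then forces the analogous split at every vertex. Now I would apply Lemma~\ref{LLM}: the valence $6$ equals $|H|/|H\cap H^g|$ in the case $HgH = Hg^{-1}H$, giving the relation $|H| = 6|H\cap H^g|$ of part~(1); and it equals $2|H|/|H\cap H^g|$ otherwise, giving $|H| = 3|H\cap H^g|$ of part~(2). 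The remaining identity — relating $|H|$ (or $|H\cap H^g|$) to $|H\la g\ra \cap HgH|$ — is obtained by counting, within the coset graph picture under $\varphi$ of Lemma~\ref{cosrepr}, the neighbours of $v$ that land in the orbit $v^{\la g\ra}$: these correspond to double-coset elements $HxH$ (with $x \in HgH$ or $Hg^{-1}H$) that meet $H\la g\ra$, and since exactly $2$ of the $6$ neighbours are in that orbit we get the factor $2$ in part~(1), while in the directed-type case (2) the factor becomes $1$ because $HgH$ and $Hg^{-1}H$ are distinct and each contributes one cycle-neighbour. I would make the identity $|H\la g\ra \cap HgH| = $ (number of $H$-cosets $Hx \subseteq H\la g\ra$ with $Hx$ adjacent to $H$) $\times |H|$ precise by the orbit–stabiliser principle: $|H\la g\ra \cap HgH|$ counts those $h\langle g\rangle$-translates times $|H|$, and adjacency of $H$ to $Hg^i$ happens exactly for the cycle-steps.

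The main obstacle, I expect, is bookkeeping the two cases cleanly: one must decide whether the edge $\{v, v^g\}$ — which by construction is a cycle-edge — generates a self-paired double coset ($HgH = Hg^{-1}H$) or not, and carefully verify that \emph{in both cases} precisely two of the six neighbours of $v$ lie in $v^{\la g\ra}$ and that these are exactly $v^g$ and $v^{g^{-1}}$ (the two cycle-neighbours). When $HgH = Hg^{-1}H$ these coincide as a single double coset covering both cycle-neighbours, yielding $|H\la g\ra \cap HgH| = 2|H| = 12|H\cap H^g|$, i.e. $|H| = 6|H\cap H^g| = \tfrac12|H\la g\ra \cap HgH|$; when $HgH \ne Hg^{-1}H$, the double coset $HgH$ accounts for just one cycle-neighbour, giving $|H\la g\ra \cap HgH| = |H| = 3|H\cap H^g|$. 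A secondary subtlety is confirming $g$ can be taken of order exactly $n$ (not a proper divisor): this follows because $\la g\ra$ already has an orbit of size $n$, namely $v^{\la g\ra}$, forcing $|\la g\ra| \ge n$, while $g = \rho$ was assumed to have two orbits of size $n$ so $|\la g\ra|$ divides... in fact equals $n$ by semiregularity of $\rho$ on each orbit. Once these points are nailed down, parts~(1) and~(2) drop out by combining Lemma~\ref{LLM} with the neighbour count, so I would organise the write-up as: (i) set up $g$, $H$, core-freeness; (ii) the $2+4$ neighbour split; (iii) the two double-coset cases via Lemma~\ref{LLM}; (iv) the $|H\la g\ra \cap HgH|$ identity by orbit counting.
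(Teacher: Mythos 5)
Your proposal follows essentially the same route as the paper's proof: represent $\G$ as the coset graph of Lemma~\ref{cosrepr}, read off $|H|=6|H\cap H^g|$ (self-paired case) or $|H|=3|H\cap H^g|$ from Lemma~\ref{LLM}, and get $|H\la g\ra\cap HgH|=2|H|$ or $|H|$ by noting that exactly two neighbours of $v$, namely its cycle-neighbours, lie in the cyclic orbit, with the factor $2$ versus $1$ governed by whether $g^{-1}\in HgH$. Two small repairs are needed in your write-up: you cannot simply take $g=\rho$, since the cycle-neighbours of $v$ are $v^{\rho^{\pm k}}$ for some $k$ coprime to $n$, so $g$ must be chosen as the power of $\rho$ realising a cycle edge (still of order $n$; this is exactly the paper's choice of the element $c$ with $\{v,v^c\}\in E(\G)$), and since the lemma asserts the conditions for an arbitrary vertex stabiliser $H$, you should finish by conjugating the stabiliser of your chosen cycle vertex, together with $g$, onto $H$, as the paper does. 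Otherwise the argument is sound, and your treatment of case (2) via the observation that $g^{-1}\notin HgH$ when the double cosets differ is a slightly cleaner substitute for the paper's arc-inverting argument.
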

\begin{proof}
There is a subgroup $C \le G$ and a vertex $v \in V(\G)$ such that 
$C$ is cyclic and semiregular with two orbits, and the subgraph of $\G$ induced by 
the orbit $v^C$ is a cycle. Choose $c \in C$ such that $\{v,v^c\}$ is an edge. 
Note that $c$ has order $n$.  

It is easy to see that $\G$ is not only $G$-edge- , but also $G$-vertex-transitive. 
Thus Lemma~\ref{cosrepr} can be applied to $G, K:=G_v$ and $c$, and this yields 
$$
\G \cong \G':=\cosg(G,K,K\{c,c^{-1}\}K).
$$

Note that, as both $H$ and $K$ are vertex stabilisers, 
$H=K^{g'}$ for some $g' \in G$. 

Assume first that $K c K=K c^{-1} K$.  
Using the formula for the valence of $\G'$ given in  
Lemma~\ref{LLM}, we find  
\begin{equation}\label{eq:val1}
|K|=6 |K \cap  K^c|.
\end{equation}

Let $\varphi$ be the isomorphism between $\G'$ and $\G$ 
defined in Lemma~\ref{cosrepr}.  
Since $K c K=K c^{-1} K$, it follows that 
$\G'$ is $G$-arc-transitive. This means that $\G$ is also $G$-arc-transitive, hence 
the stabiliser $K$ is transitive on $\G(v)$. It follows that 
$v^C \cap (v^c)^K=\{v^c,v^{c^{-1}}\}$. Applying $\varphi^{-1}$, we get 
$$
\{ Kc^i : 0 \le i \le n-1 \} \cap \{ K ck : k \in K \}=\{ Kc, Kc^{-1} \}.
$$ 
This shows that $K \la c \ra \cap K c K=K c \cup K c^{-1}$ holds in $G$, and so 
\begin{equation}\label{eq:cap1}
|K \la c \ra \cap 
K c K|=2|K|. 
\end{equation}
Using also that $H=K^{g'}$, \eqref{eq:val1} and 
\eqref{eq:cap1} show that choosing $g$ to be $c^{g'}$, part (1) 
of the lemma holds.

Now assume that $K c K \ne K c^{-1} K$.  
Using again Lemma~\ref{LLM}, we find  
\begin{equation}\label{eq:val2}
|K|=3 |K \cap  K^c|.
\end{equation}

In this case $\G'$ is not arc-transitive. Thus neither is $\G$, and $\G(v)$ splits into two $K$-orbits of the same size. We claim that $v^c$ and $v^{c^{-1}}$ belong to different 
$K$-orbits. For otherwise, $v^{c^{-1}}=v^{cg''}$ for some $g'' \in K$, and this would imply that the automorphism $cg''$ inverts the arc $(v^{c^{-1}},v)$, 
contradicting that $\G$ is not arc-transitive.  Then  
$v^C \cap (v^c)^K=\{v^c\}$, and this yields       
$K \la c \ra \cap K c K=K c$, and so  
\begin{equation}\label{eq:cap2}
|K \la c \ra \cap K c K|=|K|. 
\end{equation}
Then \eqref{eq:val2} and \eqref{eq:cap2} show that part (2) of the lemma holds
for $g=c^{g'}$.
\end{proof}
\section{Blocks}\label{sec:min}
Throughout this section we keep the following notation:
\medskip

\begin{quote}
$\G \in \F(d)$ is a $G$-edge-transitive graph of order $2n$  
for some $d \ge 6$.
\\ [+1ex] 
$C \le G$ is a cyclic semiregular subgroup with two orbits and one of the subgraphs  induced by these orbits is a cycle. 
\\ [+1ex]
$B$ is a non-trivial block for $G$ and $\B$ is the block system induced by $B$.
\end{quote}
\medskip

We say that $B$ is {\em cyclic} when 
it is contained in one of the $C$-orbits, and {\em non-cyclic} otherwise. 

Recall that, $C_{\{B\}}$ is 
the set-wise stabiliser of $B$ in $C$, and   
$\B$ is said to be normal when there is a normal subgroup $N$ of $G$ such that 
$\B$ consists of the $N$-orbits. 
   
\begin{lem}\label{b1}
Suppose that $B$ is cyclic such that $|B| < n/2$.
Then the kernel of the action of $G$ on $\B$ is equal to $C_{\{B\}}$. Furthermore, 
$\G$ is a normal cover $\G/\B$ and $\G/\B \in \F(d)$. 
\end{lem}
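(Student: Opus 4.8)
The plan is to exploit that $C$ is semiregular with exactly two orbits, say $\Omega_1$ and $\Omega_2$, each of size $n$, and that $B$ is cyclic, so $B \subseteq \Omega_i$ for some $i$. First I would observe that $C_{\{B\}}$ acts on $B$, and since $C$ is semiregular on $\Omega_i$ and cyclic, $C_{\{B\}}$ is the unique subgroup of $C$ of order $|B|$ and it is transitive (indeed regular) on $B$; moreover the $C_{\{B\}}$-orbits inside $\Omega_i$ are precisely the blocks $B^c$, $c \in C$, that lie in $\Omega_i$. The key point is to show the block system $\B$ restricted to $\Omega_i$ coincides with the orbit partition of $C_{\{B\}}$ on $\Omega_i$: any block of $G$ containing a point of $\Omega_i$ and contained in $\Omega_i$ is a union of $C_{\{B\}}$-orbits, but since it has size $|B|=|C_{\{B\}}|$ it is a single such orbit. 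The role of the hypothesis $|B| < n/2$ is to force $B$ to be cyclic is already given, but more importantly it will be needed to rule out $B$ meeting both $\Omega_1$ and $\Omega_2$ in later arguments and, here, to guarantee the cycle structure survives in the quotient; I would use it to ensure that the edges of the induced cycle on $v^C$ do not collapse, i.e.\ consecutive vertices on the cycle lie in distinct blocks.

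Next I would identify the kernel $K$ of the $G$-action on $\B$. Clearly $C_{\{B\}} \le K$, since $C_{\{B\}}$ fixes $B$ setwise and, being a subgroup of the semiregular group $C$, it permutes the remaining blocks among themselves (each block is a $C_{\{B\}}$-orbit or contained in the other $C$-orbit $\Omega_{3-i}$, on which $C_{\{B\}}$ also acts with orbits of size $|B|$, matching the block sizes). For the reverse inclusion I would argue that $K$ fixes every block, hence $K$ normalises the partition into $C_{\{B\}}$-orbits on each $\Omega_j$; combined with $G$-edge-transitivity and connectedness of $\G$, a counting or orbit argument shows $K$ is semiregular on each block — here I would invoke Proposition~\ref{cover}(2) after first establishing that $\G$ is a cover, or argue directly that $|K| = |B|$ by showing $K$ acts faithfully and transitively on a block. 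The cleanest route: show $\G$ is a normal $r$-cover of $\G/\B$ with $r = 1$ (see below), then Proposition~\ref{cover}(2) gives $K$ regular on every block, so $|K| = |B| = |C_{\{B\}}|$, and with $C_{\{B\}} \le K$ we conclude $K = C_{\{B\}}$.

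To see that $\G$ is a normal cover (i.e.\ $r=1$), I would use the structure coming from $\F(d)$: the vertex $v$ has a neighbour on the cycle $v^C$, namely $v^c$ for the generator $c$ of $C$ with $\{v,v^c\} \in E(\G)$, and the block $B' \ni v^c$ is $C_{\{B\}}$-conjugate within $\Omega_i$ to $B$. The number $r = |\G(v) \cap B'|$ counts neighbours of $v$ inside $B'$; since the cycle on $v^C$ visits each $C_{\{B\}}$-orbit in $\Omega_i$ in $|B|$ consecutive steps (using $|B| < n/2$ to ensure the cycle genuinely passes through at least three blocks, so it is not degenerate), $v$ has exactly one cycle-neighbour in $B'$. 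The remaining (non-cycle) neighbours of $v$ lie in $\Omega_{3-i}$, and I would show each relevant block there also meets $\G(v)$ in exactly one vertex, by a similar semiregularity argument together with edge-transitivity forcing $r$ to be constant over all edges; since one edge gives $r=1$, all do. Hence $\G/\B$ has the same valence $d$ as $\G$ by Proposition~\ref{cover}(2).

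Finally, I would verify $\G/\B \in \F(d)$. The image $\bar C$ of $C$ in $G/K = G/C_{\{B\}}$ is cyclic, and it is semiregular on $V(\G/\B)$ with two orbits: its orbits are the images of $\Omega_1,\Omega_2$, each of size $n/|B|$, and semiregularity follows because $C_{\{B\}}$ is exactly the stabiliser in $C$ of a point of $\B$. One of the two induced subgraphs on these orbits is the image of the cycle on $v^C$; since $\G$ is a cover of $\G/\B$ and the cycle on $v^C$ projects onto a closed walk that, by the $r=1$ property, is again a cycle (each block on it has exactly two cycle-neighbours among blocks), this induced subgraph in $\G/\B$ is a cycle. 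Together with the preserved valence $d$, this shows $\G/\B \in \F(d)$. \emph{The main obstacle} I anticipate is the $r=1$ step: carefully ruling out degeneracies of the projected cycle and showing the cover is genuine rather than just an $r$-cover with $r>1$ — this is exactly where the bound $|B| < n/2$ does its work, and it must be used with care, since for $|B| \ge n/2$ the cycle could wrap in a way that creates multiple neighbours in a block or collapses edges.
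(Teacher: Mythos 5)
Your proposal follows the paper's own route essentially step for step: the blocks inside each $C$-orbit are the orbits of $C_{\{B\}}$ (the unique subgroup of the cyclic group $C$ of order $|B|$), which gives $C_{\{B\}}\le K$ and normality of $\B$; the hypothesis $|B|<n/2$ yields $r=1$ at an edge of the induced cycle, Proposition~\ref{cover} upgrades this (by constancy of $r$) to a normal cover with $K$ regular on every block, whence $K=C_{\{B\}}$; and the quotient data ($C/C_{\{B\}}$ semiregular with two orbits, the projected cycle, preserved valence) give $\G/\B\in\F(d)$. One local inaccuracy: the cycle does \emph{not} visit each block ``in $|B|$ consecutive steps'' --- since the blocks in the cycle orbit are the orbits of $C_{\{B\}}=\la c^{n/|B|}\ra$, the vertices of one block are spaced $n/|B|$ apart along the cycle, and the fact you actually need is that the two cycle-neighbours $v^{c}$ and $v^{c^{-1}}$ lie in distinct blocks, both different from the block of $v$, which holds precisely because $n/|B|>2$, i.e.\ $|B|<n/2$ (your parenthetical ``at least three blocks'' is the correct reason; the contiguous-arc picture would even put cycle edges inside blocks and destroy the cover property). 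Similarly, the claim that $C_{\{B\}}$ fixes setwise the blocks lying in the other $C$-orbit should be justified by the same uniqueness argument you already used (those blocks are the orbits of the setwise stabiliser in $C$ of any one of them, a subgroup of order $|B|$, hence equal to $C_{\{B\}}$), not merely by the orbit sizes matching the block sizes; with these two small repairs the argument is correct and coincides with the paper's.
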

\begin{proof}
Denote by $V_i$ the $C$-orbits, $i=1,2$, and by 
$K$ the kernel of the action of $G$ on $\B$. 
It is clear that any block in $\B$ is contained in either $V_1$ or $V_2$. 
Consider the blocks contained in $V_1$. These form a block system for $C$, 
and as $C$ is regular on $V_1$, it follows that these blocks are the 
$C_{\{B_1\}}$-orbits, where $B_1$ is any block contained in $V_1$.
The group $C_{\{B_1\}}$ is regular on $B_1$, hence $|C_{\{B_1\}}|=|B_1|=|B|$, 
from which it follows that $C_{\{B\}}=C_{\{B_1\}}$. 
The same applies to $V_2$, and we conclude that $\B$ consists of the 
$C_{\{B\}}$-orbits. Thus $K \ge C_{\{B\}}$, in particular, $\B$ is normal. 

It can be assumed w.l.o.g.~that the subgraph of $\G$ induced by $V_1$ is a cycle. 
Now, fix an edge $\{u,v\}$ such that $u, v \in V_1$. 
Since $|B| < n/2$, it follows that $\G(u) \cap B'=\{v\}$, where $B'$ is the block containing $v$. By Lemma~\ref{cover}(1), $\G$ is a normal cover of $\G/\B$.
Then part (2) of the same lemma shows that $K$ is regular on every block, 
and so we have $K=C_{\{B\}}$.

In order to see that $\G/\B$ belongs to $\F(d)$, one only needs to observe that 
$\G/\B$ has valence $d$, $C/C_{\{B\}}$ is semiregular with two 
orbits, the induced cycle of $\G$ on $V_1$ projects 
to an induced cycle of $\G/\B$, and $V_1$ projects to a $C/C_{\{B\}}$-orbit. 
\end{proof}

\begin{lem}\label{b2}
Suppose that $B$ is non-cyclic. 
\begin{enumerate}[{\rm (1)}]
\item $B$ is a union of two $C_{\{B\}}$-orbits. The group $C$ acts transitively   
on $\B$ with kernel equal to $C_{\{B\}}$. 
\item If $|B| > 2$ and $B$ is minimal, then $\B$ is normal.
\end{enumerate}
\end{lem}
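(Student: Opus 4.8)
The plan is to pin down $\B$ by working with the $C$-translates of $B$ for part~(1), and with the kernel of $G$ acting on $\B$ for part~(2); throughout I write $V_1,V_2$ for the two $C$-orbits and $m:=|C_{\{B\}}|$, noting $|C|=n$ since $C$ is semiregular with two orbits on $2n$ vertices. For part~(1) I would first observe that $C_{\{B\}}\le C$ is semiregular and fixes each of $V_1,V_2$ set-wise, so $B\cap V_1$ and $B\cap V_2$ are unions of $C_{\{B\}}$-orbits, each of size $m$; hence $|B\cap V_1|=am$ and $|B\cap V_2|=bm$ with $a,b\ge 1$, the latter because $B$ is non-cyclic. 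The decisive point is that the distinct $C$-translates of $B$ are pairwise disjoint (being blocks) and cover $V(\G)$: since $C$ is transitive on $V_i$ and $B\cap V_i\ne\emptyset$, already the translates of $B$ cover $V_i$ for $i=1,2$. As there are exactly $|C|/|C_{\{B\}}|=n/m$ of these translates, counting vertices gives $2n=(n/m)\,|B|=(n/m)(a+b)m$, forcing $a+b=2$ and so $a=b=1$. This proves the first assertion and shows $|B|=2m$, whence $|\B|=2n/|B|=n/m$ equals the number of $C$-translates of $B$; therefore $C$ is transitive on $\B$. Finally, since $C$ is abelian, $C_{\{B^c\}}=C_{\{B\}}$ for every $c\in C$, so the kernel of $C$ on $\B$, being $\bigcap_{c\in C}C_{\{B^c\}}$, equals $C_{\{B\}}$.

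For part~(2) let $K$ be the kernel of the action of $G$ on $\B$. Since each element of $K$ fixes every block set-wise, the $K$-orbits refine $\B$; hence if $K$ is transitive on $B$ then $\B$ consists of the $K$-orbits and is normal, and it suffices to establish this transitivity. By part~(1) every block is a $C$-translate of $B$, all of which are fixed by $C_{\{B\}}$ (again as $C$ is abelian), so $C_{\{B\}}\le K$; consequently the $K$-orbits on $B$ are unions of the two $C_{\{B\}}$-orbits $B\cap V_1,B\cap V_2$, and $K$ has one or two orbits on $B$. Suppose, for a contradiction, that it has two, namely $B\cap V_1$ and $B\cap V_2$, each of size $m$. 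Since $K\lhd G$, the stabiliser $G_{\{B\}}$ normalises $K$ and so permutes its orbits, whence $\{B\cap V_1,B\cap V_2\}$ is a $G_{\{B\}}$-invariant partition of $B$. As $G_{\{B\}}$ is transitive on the block $B$ and, because $|B|>2$, we have $1<m<2m=|B|$, this partition is a non-trivial block system for $G_{\{B\}}$ on $B$, i.e., $G_{\{B\}}$ is imprimitive on $B$. On the other hand, minimality of $B$ means that no non-trivial block system of $G$ refines $\B$ properly, which via the standard correspondence between such refinements and the block systems of $G_{\{B\}}$ on $B$ says exactly that $G_{\{B\}}$ is primitive on $B$ — a contradiction. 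Hence $K$ is transitive on $B$ and $\B$ is normal.

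The vertex counting in part~(1) and the abelian-conjugation observations are routine, so the step I expect to need the most care is the last one of part~(2): one must phrase correctly the dictionary between the block systems of $G$ refining $\B$ and the block systems of the set-wise stabiliser $G_{\{B\}}$ acting on $B$, in order to turn the minimality of $B$ into primitivity of $G_{\{B\}}$ on $B$ — and, conversely, to recognise the $K$-orbit partition $\{B\cap V_1,B\cap V_2\}$ as a genuine (non-trivial) block system for $G_{\{B\}}$ on $B$, which is where the hypothesis $|B|>2$ is used.
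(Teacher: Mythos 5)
Your proof is correct and takes essentially the same route as the paper: part (1) is the same counting argument with the $C$-translates of $B$, and part (2) rests, exactly as in the paper, on the facts that minimality of $B$ makes $G_{\{B\}}$ act primitively on $B$ and that $C_{\{B\}}$ lies in the kernel $K$ (with $|B|>2$ making its action on $B$ non-trivial). The only cosmetic difference is that where the paper invokes the standard fact that a non-trivial normal subgroup of a primitive group is transitive, you reprove it in situ by exhibiting the would-be $K$-orbits $B\cap V_1$, $B\cap V_2$ as a non-trivial $G_{\{B\}}$-invariant partition of $B$.
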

\begin{proof}
(1): Since there are two $C$-orbits on $V(\G)$ of the same size and 
$B$ has a point in common with both, it follows that $B$ 
splits into two $C_{\{B\}}$-orbits, hence $|B|=2|C_{\{B\}}|$. 

Let $\bar{C}$ and $K$ be the image and the kernel, respectively, 
of the action of $C$ on $\B$. It is clear that $C$ acts transitively on $\B$. 
This shows that $\bar{C}$ is regular, hence $C_{\{B\}} \le K$, and we can write  
$$
|C|/|K|=|\bar{C}|=|\B|=2n/|B|=|C|/|C_{\{B\}}|.
$$ 
This shows that $|K|=|C_{\{B\}}|$ also holds, and so $K=C_{\{B\}}$. 

(2): For a subgroup $X \le G_{\{B\}}$, denote by $X^*$ the image of 
the action of $X$ on $B$.  Let $M$ be the kernel of the action of $G$ on $\B$. By the minimality of $B$, 
$(G_{\{B\}})^*$ is primitive. On the other hand, as $1 <  (C_{\{B\}})^* \le M^* \lhd (G_{\{B\}})^*$, $M^*$ is transitive on $B$. This shows that $\B$ is normal. 
\end{proof}

Form now on we focus on the case when $n$ is odd. 

\begin{lem}\label{not thin}
Suppose that $n$ is odd and $B$ is minimal non-cyclic block for $G$. 
Then $|B| > 2$. 
\end{lem}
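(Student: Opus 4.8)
The plan is to argue by contradiction: suppose $|B|=2$. By Lemma~\ref{b2}(1), the block $B$ is the union of two $C_{\{B\}}$-orbits; since $|B|=2$, this means $C_{\{B\}}$ is trivial, so each block in $\B$ meets each $C$-orbit in exactly one point, and $C$ acts regularly on $\B$. Thus $\B$ is a block system with $n$ blocks of size $2$, and the kernel $M$ of the action of $G$ on $\B$ is a group in which $C$ (of order $n$) intersects trivially; in particular $M^*$, the image of $G_{\{B\}}$ acting on the $2$-element set $B$, has order at most $2$, and $M$ itself has every orbit of size $\le 2$. Pairing each point with its partner in its block defines a fixed-point-free involution, i.e.\ a perfect matching $\mathcal{M}$ on $V(\G)$ preserved by $G$.

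Next I would exploit that $n$ is odd. The element $c \in C$ with $\{v,v^c\}\in E(\G)$ has odd order $n$, and $C=\la c\ra$ acts regularly on $\B$; the quotient $\G/\B$ is then a circulant on $n$ vertices (a Cayley graph of $C/C_{\{B\}}\cong\Z_n$). By Proposition~\ref{cover}(1), $\G$ is a normal $r$-cover of $\G/\B$ for $r=|\G(v)\cap B|\in\{0,1\}$ (as $|B|=2$). If $r=0$ then $\G/\B$ has valence $d\ge 6$ and $\G$ is a normal cover of it, so $\G/\B$ is a $G/M$-edge-transitive circulant of odd order $n$ with valence $d$; but then the matching $\mathcal{M}$ is an independent perfect matching whose removal-and-quotient is consistent, and one can push the $\F(d)$-structure of $\G$ down to see that $\G$ is a bipartite-type double cover of $\G/\B$ — I would then invoke the parity obstruction, namely that a connected graph of odd order cannot have a $G$-invariant perfect matching unless... — wait, more carefully: a perfect matching on $2n$ vertices is fine, but the point is that $G$ acts on the $n$ edges of $\mathcal{M}$, and $C$ of odd order $n$ acts on these $n$ matching-edges; combined with the semiregularity of $C$ and the cyclic structure, one gets that $G_{\{B\}}$ must swap the two points of $B$ (forcing $\G$ to be arc-transitive along the matching edge) which by Lemma~\ref{cond} and the valence-$6$ count, or by the arc-transitive-circulant classification (Theorem~\ref{K}), leads to a contradiction with $d\ge 6$ and $n$ odd.

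Let me restate the core of the argument more cleanly. Since $|B|=2$ and $C_{\{B\}}=1$, the block system $\B$ is precisely the orbit decomposition of a semiregular subgroup only if... the cleanest route is: the graph $\G$ is a $G$-edge-transitive graph carrying a $G$-invariant perfect matching $\mathcal{M}$; contract $\mathcal{M}$ to get $\G/\B$ on $n$ (odd) vertices with $G/M$ acting edge-transitively, where $M$ has order dividing $2$ on each block. Because $C\le G$ has odd order and acts faithfully and regularly on $\B$, we get $M\cap C=1$, so $M$ is a $2$-group acting semiregularly with blocks of size $2$, hence $|M|\le 2$. If $|M|=1$ then $\G\cong\G/\B$ which is absurd ($\G$ has $2n$ vertices). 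So $|M|=2$, $M=\la\tau\ra$ where $\tau$ is the matching involution, $M\lhd G$, and $\B$ consists of the $M$-orbits. Now $\G/M=\G/\B$ is a circulant of odd order, and by Proposition~\ref{cover}(1) $\G$ is a normal $r$-cover for $r\in\{0,1\}$. The case $r=1$ forces $\G$ and $\G/\B$ to have the same valence $d$, and $\tau$ to act as an automorphism fixing no vertex but commuting with everything — then $\G$ is $\G/\B\times K_2$ or a twisted version, and being in $\F(d)$ with the induced cycle of length $n$ forces a contradiction since the matching $\mathcal{M}$ would have to be compatible with the cycle on $v^C$, but $v^C$ has odd length $n$ and each block meets it once, so $\mathcal{M}$ cannot be contained in $E(\G)$ restricted to a single $C$-orbit; chasing this, $\tau$ maps one $C$-orbit to the other while commuting with $c$, i.e.\ $\tau$ conjugates $c$ to $c$, so $\la c,\tau\ra$ is abelian of order $2n$ acting regularly — but then $\G$ is a Cayley graph of $\Z_{2}\times\Z_n\cong\Z_{2n}$, a circulant of even order $2n=2\cdot(\text{odd})$, which is $G$-edge-transitive of valence $d\ge 6$ and lies in $\F(d)$; such circulants of twice-odd order are handled (or can be ruled out) via Theorem~\ref{K} and the observation that the induced cycle length $n$ must divide appropriately — contradiction. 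The case $r=0$ is similar but easier: $\G/\B$ then has valence $d$, $\G$ is a disjoint-from-matching cover, and the same regular abelian group $\la c,\tau\ra$ argument applies.

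\textbf{Main obstacle.} The delicate point is showing that $\tau$ (the matching involution generating $M\lhd G$) must centralise $c$, or more precisely that $\la c,\tau\ra$ is abelian and regular — this requires knowing how $\tau$ interacts with the two $C$-orbits $V_1,V_2$. Since $B$ is non-cyclic, $\tau$ swaps $V_1$ and $V_2$; since $C_{\{B\}}=1$ and $C$ is regular on $\B$, conjugation by $\tau$ normalises $C$ and must act on $C\cong\Z_n$; the key is to pin down this action (ideally trivial, using that $\tau$ preserves the matching between $V_1$ and $V_2$ and that $C$ acts the same way on both orbits because the induced subgraph on $V_1$ is a cycle respected by $C$). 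Once $\la c,\tau\ra$ is identified as a regular cyclic (or elementary-abelian-by-cyclic) group of order $2n$, the problem reduces to circulants of twice-odd order in $\F(d)$, which the quoted classifications dispatch. I expect verifying $\tau^{-1}c\tau = c$ (as opposed to $c^{-1}$ or some other power) to be the step requiring the most care, and it is where the hypothesis ``$n$ odd'' and ``$B$ minimal'' both get used.
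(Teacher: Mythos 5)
Your overall strategy -- pair the two points of each block by an involution, obtain a cyclic group of order $2n$ containing $c$, and appeal to the classification of arc-transitive circulants (Theorem~\ref{K}) -- is indeed the route the paper takes. But as written the argument has genuine gaps, both in the set-up and, more seriously, at the point where the contradiction is supposed to appear.

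First, the structural claims about the kernel $M$ are wrong: a subgroup acting trivially on $\B$ with blocks of size $2$ embeds into $\Z_2^{\,n}$, not $\Z_2$, so ``$|M|\le 2$'' does not follow; and $M=1$ does not give $\G\cong\G/\B$ (a group can easily act faithfully on the block system). More importantly, the matching involution $\tau$ need not lie in $G$, and when the second $C$-orbit carries the ``shifted'' adjacency $v\sim v^{c^{\pm k}}$ with $k\ne 1$, $\tau$ is not even an automorphism of $\G$, so $\la c,\tau\ra$ is not a regular subgroup of $\aut(\G)$ and $\G$ itself need not be a circulant. The paper circumvents exactly this by noting only that $\tau$ \emph{centralises} $G$ (which is automatic from the $G$-invariant pairing -- this is not the delicate point you flagged) and by passing to an auxiliary graph $\G'$ whose edge set is the $\la G,\tau\ra$-orbit of $\{u,u^c\}$, i.e.\ $E(\G)\cup E(\G)^\tau$; that graph is an arc-transitive circulant on $\la c,\tau\ra\cong\Z_{2n}$ even when $k\ne1$.

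Second, the final step is asserted rather than proved: ``such circulants of twice-odd order are handled (or can be ruled out) via Theorem~\ref{K}'' is false as a blanket statement -- edge-transitive circulants of order $2n$ with $n$ odd and valence at least $6$ certainly exist (e.g.\ complete graphs), and some of them lie in $\F(d)$. The contradiction only comes from the very specific shape of the connection set forced by $|B|=2$: with $S=\{x\in C:\{u,v^x\}\in E(\G)\}$ one first shows the dichotomy ($k=1$ and $S=S^{-1}$) or ($k\ne1$ and $S\cap S^{-1}=\emptyset$), and then the connection set of $\G'$ is $S'\cup \tau S\cup \tau S^{-1}$ with $S'=\{c^{\pm1},c^{\pm k}\}$; a case-by-case comparison of this set with the four possibilities (a)--(d) of Theorem~\ref{K} (complete graph, normal, union of $D$-cosets, $D^{\#}R$) is what actually yields the contradiction, using $|S|=d-2\ge 4$, the parity of $n$, and the size imbalance $|S'|\in\{2,4\}$ versus $|\tau S\cup\tau S^{-1}|\ge 4$. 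None of this analysis appears in your proposal, so the key contradiction is missing.
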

\begin{proof}
Assume on the contrary that $|B|=2$. Write $B=\{u,v\}$. 
Then $u^C \ne v^C$, and we may assume that 
the subgraph of $\G$ induced by $u^C$ is a cycle. Let $c \in C$ such that 
$\{u,u^c\}$ is an edge. Clearly, $c$ has order $n$.
  
There is a unique number $1 \le k \le (n-1)/2$ such that $v^{c^k}$ and 
$v^{c^{-k}}$ are the neighbours of $v$. Define the subset $S \subseteq C$ as 
$$
S=\{ x \in C : \{u,v^x\} \in E(\G) \}.
$$
It is clear that $|S|=d-2$. 
Also, $1_C \notin S$, where $1_C$ is identity element of $C$. 
For otherwise, $\{u,v\}$ is an edge, but as it is also a block, $G_u=G_v$, 
and this contradicts that $\G$ is edge-transitive. 

We say that two blocks in $\B$ are adjacent when these are adjacent as vertices of 
$\G/\B$. It can be easily seen that any two subgraphs of $\G$ induced by 
the union of two adjacent blocks 
are isomorphic to the same graph, say $\Delta$. We claim that 
$$
\Delta \cong K_2~\text{or}~2 K_2.
$$ 
Assume for the moment that there exists some $s \in S$ such that 
$s \notin \{c,c^{-1},c^k,c^{-k}\}$. 
Then the subgraph induced by $\{u,v,u^s,v^s\} \cong 2K_2$ or 
$K_2$ depending whether $s^{-1} \in S$ or not, and the claim follows. 
Now, as $|S|=d-2 \ge 4$, we are left with the case when $k \ne 1$ and 
$S=\{c,c^{-1},c^k,c^{-k}\}$. In this case the subgraph induced by 
$\{u,u^c,v,v^c\}$ is the $3$-path $(v,u^c,u,v^c)$. 
Since $\G$ is $G$-edge-transitive, there is some $g \in G$ mapping 
$\{u,u^c\}$ to $\{u,v^c\}$. This implies that $g$ maps the $3$-path to itself, 
hence it induces an automorphism of it. This is clearly impossible, and so 
the claim is proved.

Moreover, the argument above also shows that we have the following options: 
\begin{equation}\label{eq:or}
(k=1~\text{and}~S=S^{-1})~\text{or}~(k \ne 1~\text{and}~S \cap S^{-1}=
\emptyset).
\end{equation}

Now, define the permutation $t$ of $V(\G)$ as  
$$
t=(u\, v)(u^c\, v^c) \cdots (u^{c^{n-1}}\, v^{c^{n-1}}).
$$ 
Observe that $t$ commutes with any element of $G$. In particular, 
$\hat{C}:=\la c, t \ra$ is a regular cyclic group.

Define next the graph $\G'$ by 
$$
V(\G')=V(\G)~\text{and}~ 
E(\G')=\{ \{u,u^c\}^g : g \in \la G, t \ra \}.
$$
Using that $\la G, t\ra=G \times \la t \ra$, we find 
$$
E(\G')=\{ \{u,u^c\}^g : g \in G\}~\cup~\{ \{v,v^c\}^g : g \in G\}.
$$
This can be used to find the neighbourhood $\G'(u)$. 
If $k=1$, then $E(\G')=E(\G)$, and so $\G'(u)=\G(u)$. 
If $k \ne 1$, then $E(\G')$ splits into two edge-orbits under $G$, and 
$t$ swaps these edge-orbits. This yields that  
$\G'(u)=\G(u) \cup \G(u^t)^t=\G(u) \cup \G(v)^t$.
Now, according to \eqref{eq:cay}, $\G' \cong \cay(\hat{C},S' \cup S'')$, where  
$$
S'=\{c, c^{-1}, c^k, c^{-k}\}~\text{and}~ 
S''=t S \cup t S^{-1}. 
$$

It follows from \eqref{eq:or} 
that the valence of $\G'$ is $2+|S|$ if $k=1$, and $4+2|S|$ if $k\ne 1$. 

By definition, $\G'$ is edge-transitive. It is well-known that it must be then 
arc-transitive as well, and therefore, $\G'$ belongs to one of the families (a)-(d) in  
Theorem~\ref{K}. We consider below all possibilities case by case. 

Family~(a):  $\G'$ is the complete graph. This contradicts that $\G'$ has even 
valence and order.

Family~(b): $\G'$ is normal. Then $S' \cup S''=c^A$ for some subgroup 
$A \le \aut(\hat{C})$. This contradicts that $c$ has order $n$, while 
$t s$ has even order for each $s \in S$. 

Family~(c): There exists a subgroup $1 < D < \hat{C}$ such that $S' \cup S''$ is a union of $D$-cosets. If $|D|$ is odd, then $D \le C$, and so 
$S'$ would be a union of $D$-cosets. This is clearly impossible. 
Let $|D|$ be even. Then $t \in D$, implying $t S'=S''$ and $t S''=S'$, and so 
$|S'|=|S''|$. This contradicts that $|S'|=2$ and $|S''|=|S| \ge 4$ if $k=1$, and 
$|S'|=4$ and $|S''|=2|S| \ge 8$ otherwise. 

Family~(d): There exist subgroups $1 < D, E < \hat{C}$ such that $\hat{C}=D \times E$, 
$|D| > 3$, $\gcd(|D|, |E|)=1$, and $S' \cup S''=D^\# R$ for some subset 
$R \subseteq E^\#$. 

Let $|D|$ be odd. Then $D \le C$.  
For every $i \in \{1,-1,k,-k\}$, $|D c^i \cap  S'|=|D|-1 \ge 3$.
It follows that $|D|=5$, $k \ne 1$, and 
$S' \subset D c$. This shows that  
$c^2 \in D$, whence $D=C$. On the other hand,  
$D$ is a block for $\aut(\G')$, and so $C$ is a block for 
$G$. This is impossible.

Let $|D|$ be even. Then $t \in D$ and $D$ can be written as 
$D=\la t \ra \times D'$. Also, 
$R \subset E \le C$. As $S' \cup S''=D^\# R$ is inverse-closed, 
so is $R$, in particular, $|R|$ is even.  
Also, $S'=D^\#R \cap C=(D^\# \cap C)R=(D')^\#R$. 
Thus $|S'|=(|D'|-1)|R|$, and these imply in turn that 
$|R|=2$, $|D'|=3$, $|S'|=4$, and $|S''|=|D^\#R|-|S'|=6$. 
We have seen above that this is 
impossible.
\end{proof}

Our last lemma is one of the crucial steps towards Theorem~\ref{main}.

\begin{lem}\label{cyclic}
If $n > 5$ is odd, then $G$ admits a non-trivial cyclic block.
\end{lem}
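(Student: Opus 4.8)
The plan is to locate a minimal block of $G$ and to show that, when $n>5$ is odd, a minimal block cannot be non-cyclic.

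\emph{Reduction.} First, $\G$ is connected: a disconnected $d$-regular edge-transitive graph has all its components isomorphic, and the induced cycle on $V_1$ would force a component to coincide with that cycle, contradicting $d\ge 3$. Next, $G$ is imprimitive: if $G$ were primitive, then Corollary~\ref{prim} would give $\G\cong K_6$, the Petersen graph, its complement, or $L_2(4)$, whose orders $6,10,10,16$ are never of the form $2n$ with $n$ odd and $n>5$. So $G$ has a minimal block $B$, with block system $\B$. A minimal block is non-trivial, so if some minimal block is cyclic the lemma follows; assume therefore that every minimal block is non-cyclic, and fix such a $B$.

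\emph{Structure of $B$.} By Lemma~\ref{not thin}, $|B|>2$; since $|B|=2|C_{\{B\}}|$ by Lemma~\ref{b2}(1) and $|C_{\{B\}}|$ divides the odd number $n$, we may write $|B|=2m$ with $m$ odd and $m\ge 3$. By Lemma~\ref{b2}(2), $\B$ is the orbit set of some $N\lhd G$ with $N\ne 1$; let $K\ge N$ be the kernel of $G$ on $\B$, so that $C\cap K=C_{\{B\}}$. The setwise stabiliser $G_{\{B\}}$ induces on $B$ a \emph{primitive} group of the even degree $2m$ which contains $C_{\{B\}}|_B$, a cyclic semiregular subgroup with two orbits; hence Theorem~\ref{M} applies. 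As $m$ is odd, the affine case (degree a $2$-power) and families (d),(f) (degrees $12,24$) are impossible, so $G_{\{B\}}|_B$ is almost simple with socle one of $A_{2m}$, $A_5$ acting on the $2$-subsets of a $5$-set (with $m=5$), $PSL_\delta(q)$ with $2m=(q^\delta-1)/(q-1)$, and $M_{22}$ (with $m=11$). Passing to $\bar\G:=\G/\B$, which has $\ell:=n/m$ vertices ($\ell$ odd, $\ell\ge 3$), is connected and $G/K$-edge-transitive, and is a circulant since $C/C_{\{B\}}$ is regular on $\B$ by Lemma~\ref{b2}(1): because a connected edge-transitive circulant is arc-transitive, $\bar\G$ is one of the graphs described in Theorem~\ref{K}.

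\emph{$B$ is an independent set.} For $v\in V_1\cap B$ the only neighbours of $v$ in $V_1$ are its two cycle-neighbours, which lie outside $B$ since $\ell\ge 3$; thus $\G(v)\cap B\subseteq V_2\cap B$ and $|\G(v)\cap B|\le m<2m-1$. When $G_{\{B\}}|_B$ is $2$-transitive (all cases except the $A_5$-case, by Lemma~\ref{M-p2&3}(1)) the only invariant graphs on $B$ are the empty and the complete graph, and completeness is excluded by the last inequality, so $\G[B]$ is empty. In the $A_5$-case the action has rank $3$, with invariant graphs the Petersen graph, its complement and $K_{10}$; the latter two have valence $6>5=m$, and the Petersen graph is excluded because $V_1\cap B$ would be an independent $5$-set while the independence number of the Petersen graph is $4$; so again $\G[B]$ is empty. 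Consequently $\G$ is a normal $r$-cover of $\bar\G$ (Proposition~\ref{cover}(1)) with $r=d/\bar d$, where $\bar d$ is the valence of $\bar\G$, and $r\ge 2$: otherwise $K$ would be regular on $B$ by Proposition~\ref{cover}(2), so $|K|=2m$, contradicting that $N|_B\le K|_B$ is almost simple of order much larger than $2m$. One can moreover show that $C_{\{B\}}\le N$: in each case of Theorem~\ref{M} the subgroup $C_{\{B\}}|_B$ lies in the socle of $G_{\{B\}}|_B$, hence in $N|_B$; since $C$ is abelian all blocks have the same $C$-stabiliser $C_{\{B\}}$, so $C_{\{B\}}|_{B'}\le N|_{B'}$ for every block $B'$, whence $C_{\{B\}}N$ and $N$ induce the same permutation group on $V(\G)$ and $C_{\{B\}}N=N$.

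\emph{Endgame.} What remains is to contradict this configuration, and this is where the bulk of the work lies. I would coordinatise $\G$ by identifying each of $V_1,V_2$ with $\Z_n$ on which $C$ acts by translation, with $V_1$ carrying the $n$-cycle and $V_2$ carrying a circulant $\cay(\Z_n,\{\pm s\})$ — whose valence must be $2$, by comparing the two counts $n(d-2)$ of edges between $V_1$ and $V_2$ — and with the $V_1$--$V_2$ edges encoded by a set $T\subseteq\Z_n$ with $|T|=d-2$; the blocks of $\B$ are then the residue classes modulo $\ell$ in each copy of $\Z_n$. Counting, for a vertex of $V_1\cap B$ and for a vertex of $V_2\cap B$, the number of neighbours in each block adjacent to $B$, the $r$-cover condition forces $s\equiv\pm1\pmod{\ell}$ together with rigid congruences on $T$ modulo $\ell$. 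These are then to be reconciled with the short list for $G_{\{B\}}|_B$ coming from Theorem~\ref{M}, the list for $\bar\G$ coming from Theorem~\ref{K} (using, in particular, that $G/K$ is $2$-transitive when $\bar\G$ is complete, and the form of the connection set in the remaining cases), the identity $d=\bar d\,r$, and the hypothesis $d\ge 6$, yielding a contradiction in every case. I expect this last step — matching the internal structure of a block, governed by a primitive almost simple group, against the global circulant structure of $\bar\G$ under the cover and valence constraints — to be the principal difficulty.
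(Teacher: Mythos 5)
Your reduction (connectedness, imprimitivity via Corollary~\ref{prim}, $|B|>2$ via Lemma~\ref{not thin}, normality and the two-orbit structure via Lemma~\ref{b2}, and the application of Theorem~\ref{M} to $(G_{\{B\}})^*$) matches the paper's setup, but from that point on there is a genuine gap: your ``Endgame'' is a plan, not a proof. The whole content of the lemma is precisely the case of a non-cyclic minimal block, and what you offer there is a coordinatisation scheme plus the statement that the congruences on $T$ modulo $\ell$ ``are then to be reconciled'' with the lists from Theorems~\ref{M} and~\ref{K}, ``yielding a contradiction in every case'' --- with the admission that you expect this to be the principal difficulty. No case is actually carried out, and it is far from clear that the contradiction you aim for is reachable with these tools: you are trying to prove the \emph{stronger} statement that a minimal block cannot be non-cyclic at all, whereas the paper never establishes (nor needs) this. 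Several auxiliary steps in your sketch are also shaky: e.g.\ the argument that $C_{\{B\}}\le N$ only shows that each element of $C_{\{B\}}$ agrees with \emph{some} element of $N$ on each block separately, which does not give agreement of the induced groups on all of $V(\G)$; and the containment of $C_{\{B\}}|_B$ in the socle needs justification in the $PSL_d(q)$ case.

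The paper resolves the non-cyclic case by a construction rather than a contradiction. With $K$ the kernel of $G$ on $\B$, one first shows (using connectedness and primitivity of $K^*$, via Corollary~\ref{M-p1}) that $K$ acts \emph{faithfully} on every block. Next, the relation ``$K$ acts equivalently on $B'$ and $B''$'' is a $G$-congruence on $\B$, so its classes form a block system for the action on $\B$; since $|\B|$ is odd and, by Lemma~\ref{M-p2&3}(2), $K$ admits at most two inequivalent faithful actions of this kind, there is a single class, i.e.\ $K$ acts equivalently on all blocks. By \cite[Lemma~1.6B]{DM} each block $B'$ then contains a vertex $u'$ with $K_{u'}=K_u$ for a fixed $u\in B$, and $2$-transitivity (or the subdegrees $1,3,6$ in the $A_5$ case) forces $u'$ to be unique. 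Finally, normality of $K$ shows that the set $\hat B$ of these vertices is a block of size $|\B|$, which is odd, and an odd-size block is automatically cyclic since non-cyclic blocks have even size by Lemma~\ref{b2}(1). This mechanism --- extracting a (possibly non-minimal) cyclic block from the kernel's equivalent actions --- is exactly the idea your proposal is missing; without it, or a completed version of your case analysis, the proof is incomplete.
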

\begin{proof}
Since $n > 5$, it follows from Corollary~\ref{prim} that $G$ is imprimitive. 
Choose a minimal non-trivial block $B$ for $G$, denote by $\B$ the
block system induced by $B$, and let $K$ denote the kernel of the action of 
$G$ on $\B$.

We are done if $B$ is cyclic, hence we assume that $B$ is non-cyclic.  
By Lemma~\ref{not thin}, $|B| > 2$. 
As before, for a subgroup $X \le G_{\{B\}}$, $X^*$ denotes 
the image of the action of $X$ on $B$. 

Apply Lemma~\ref{b2} to $B$. This shows that $\B$ is normal 
and $(C_{\{B\}})^*$ is a cyclic semiregular subgroup of $(G_{\{B\}})^*$ with 
two orbits. As $B$ is minimal, $(G_{\{B\}})^*$ is also primitive, and therefore 
described by Theorem~\ref{M}. 
The fact that $n$ is odd shows that $(G_{\{B\}})^*$ is one of the groups in the families 
(a)-(f) in part (2) of Theorem~\ref{M}. Then $K^* \lhd (G_{\{B\}})^*$. By Corollary~\ref{M-p1}, $K^*$ is also primitive.  
We derive the lemma in three steps.
\medskip

\noindent
{\it Step~1.} $K$ acts faithfully on every block in $\B$.
\medskip

Assume on the contrary that $K$ acts unfaithfully on some block in $\B$.
Using the connectedness of $\G$, it is easy to show that there are adjacent 
blocks $B', B'' \in \B$ so that the kernel of the action of $K$ on $B'$ is non-trivial on $B''$. Denote by $N$ the latter kernel. Now, as $N \lhd K$ and 
$K$ is primitive on $B''$, $N$ is transitive on $B''$. This implies that any vertex 
in $B'$ is adjacent with any vertex in $B''$. This contradicts that the subgraph 
of $\G$ induced by one of the $C$-orbits is a cycle.
\medskip

Fix a vertex $u \in B$. 
\medskip

\noindent
{\it Step~2.} For each block $B' \in \B$ there exists a unique vertex $u' \in B'$ 
such that $K_u=K_{u'}$.
\medskip

Define the binary relation $\sim$ on $\B$ by letting $B' \sim B''$ if and only if 
the action of $K$ on $B'$ and $B''$, respectively, are equivalent. 
It is easy to show that $\sim$ is an equivalence relation. 

Let $B', B'' \in \B$ such that $B' \sim B''$ and let 
$g \in G$. We claim that $(B')^g \sim (B'')^g$.
There is a bijective mapping $\varphi$ from $B'$ to $B''$ such that 
$$
\forall v \in B',~\forall k \in K:~\varphi(v^k)=(\varphi(v))^k.
$$

Now, pick arbitrary $w \in (B')^g$ and $k \in K$. 
Let $\gamma_1$ be the bijection from $B'$ to $(B')^g$ defined by 
$\gamma_1(x)=x^g$ for each $x \in B'$, and let $\gamma_2$ be the bijection from 
$B''$ to $(B'')^g$ defined by 
$\gamma_2(x)=x^g$ for each $x \in B''$. 
We finish the proof of the claim by showing $\psi(w^k)=(\psi(w))^k$, 
where $\psi$ is the bijection defined as the composition $\psi=\gamma_2 \circ \varphi 
\circ \gamma_1^{-1}$.
Then $w=v^g$ for some $v \in B'$ and $gk=k'g$ for some $k' \in K$ 
because $K \lhd G$.  Thus  
$$ 
\psi(w^k)=\psi(v^{k'g})=(\varphi(v^{k'}))^g=\varphi(v)^{k'g}=(\varphi(v)^g)^k=
(\psi(w))^k.
$$

Thus $\sim$ is $G$-invariant, and therefore, it is a $G$-congruence.  
Due to \cite[Exercise~1.5.4]{DM}, the $\sim$-classes form a 
block system for $G$ with respect to its action on $\B$. 
Denote by $m$ the number of $\sim$-classes. 
As $|\B|$ is odd, so is $m$. 
On the other hand, by Lemma~\ref{M-p2&3}(2), 
$K$ has at most two inequivalent faithful actions, and we conclude that $m=1$.  

Let $B' \in \B'$ be an arbitrary block. Since $K$ acts equivalently on $B$ and $B'$, 
it follows by \cite[Lemma~1.6B]{DM} that there is an element $u' \in B'$ such that 
$K_u=K_{u'}$.  By Lemma~\ref{M-p2&3}(1), $K$ is $2$-transitive on $B'$, unless  
$|B'|=10$, $K=A_5$ or $S_5$, and it has subdegrees $1, 3$ and $6$. 
This shows that $K_x \ne K_{u'}$ for any vertex $x \in B'$ such that $x \ne u'$. 
On the other hand, $K_u=K_{u'}$ and this finishes off the proof of Step~2. 
\medskip

\noindent
{\it Step~3.} The vertices $u'$ defined in Step~2 form a cyclic block.
\medskip
  
Denote by $\hat{B}$ the set of all vertices $u'$ defined in Step~2.
The cardinality $|\hat{B}|=|\B|$, and as $|\B|$ is odd, we are done 
if we show that $\hat{B}$ is a block. 
Equivalently, $\hat{B}^g=\hat{B}$ or $B \cap \hat{B}=\emptyset$ holds for 
each $g \in G$. 

Suppose that $v^g \in \hat{B}$ for some $v \in \hat{B}$ and $g \in G$. 
We have to show that $\hat{B}^g=\hat{B}$. In fact, it is enough to show that 
$\hat{B}^g \subseteq \hat{B}$. 
Choose an arbitrary element $w \in \hat{B}$. 
Then $K_u=K_v=K_{v^g}=K_w$. Using also that $K \lhd G$, we can write
$$
K_{w^g}=K \cap G_{w^g}=K \cap (G_w)^g=
(K  \cap G_w)^g=(K_w)^g.
$$
The same argument shows that $K_{v^g}=(K_v)^g$, and thus  
$K_u=K_{v^g}=(K_v)^g=(K_w)^g=K_{w^g}$. By the definition of the set 
$\hat{B}$, $w^g \in \hat{B}$, and $\hat{B}^g \subseteq \hat{B}$ follows.  
\end{proof}
\section{Proof of Theorem~\ref{main}}\label{sec:proof}
Assume on the contrary that there is an edge-transitive graph 
$\G \in \F(d)$ of order $2n$ such that $d \ge 6$, $n$ is odd and $n > 5$. 
Choose $n$ to be the smallest possible, i.e., whenever $\G' \in \F(d')$ is 
edge-transitive of order $2n'$ such that $d' \ge 6$, $n' < n$ and $n'$ is odd, 
then $\G'$ is isomorphic to the complement of the Petersen graph. 
In what follows, we denote the latter graph by $\overline{\P}$. 

For the sake of simplicity, write $G$ for $\aut(\G)$. 
Let $C \le G$ be a cyclic semiregular subgroup with two orbits such that one of the subgraphs induced by these orbits is a cycle. 

It follows from Corollary~\ref{prim} that $G$ is imprimitive. Choose  
a minimal non-trivial block $B$ for $G$, denote by $\B$ be the block system 
induced by $B$, and let $K$ be 
the kernel of the action of $G$ on $\B$. 

Due to Lemma~\ref{cyclic} we may assume that $B$ is cyclic, i.e., any block 
in $\B$ is contained in one of the two $C$-orbits. 

As $n$ is odd, $|B| < n/2$. By Lemma~\ref{b1}, $C_{\{B\}} \lhd G$. 
Let $p$ be a prime divisor of $|B|$, and let $P \le C_{\{B\}}$ be the subgroup 
of order $p$. The group $P$ is characteristic in $C_{\{B\}}$ and 
as $C_{\{B\}} \lhd G$, it follows that $P \lhd G$. By Lemma~\ref{b1}, $\G$ is a normal cover of $\G/P$ and $\G/P \in \F(d)$. Due to Proposition~\ref{cover}(2), 
$\G/P$ is also $G/P$-edge-transitive. 
The order of $\G/P$ is $2n/p$, hence the minimality of $n$ yields 
$$
n=5p,~\G/P \cong \overline{\P},~\text{and}~G/P \cong A_5~\text{or}~S_5.
$$
The last condition follows from the fact that 
$G/P$ acts transitively on the edges of $\overline{\P}$.

If $p=3$ or $5$, then $\G$ has order $30$ or $50$ and its valence is $6$. 
It follows from \cite[Table~1]{JMSV} that no graph in $\F(6)$ of order $30$ or 
$50$ is edge-transitive. Thus $p > 5$, and the Zassenhaus theorem (see  
\cite[Chapter~1, Theorem~18.1]{H}) shows that there exists a subgroup $L < G$ such that  $G=P \rtimes L$, $P \cong \Z_p$ and $L \cong G/P \cong A_5$ or 
$S_5$. 

Fix $z$ to be a generator of $P$ and let $L$ be identified with $A_5$ or $S_5$. 
Note that every element of $G$ can be expressed as a product 
$$
z^i \lambda,~\text{where}~0\le i \le p-1~\text{and}~\lambda \in L.
$$

Let $N=C_L(P)$. Then $N \lhd L$ and $L/N$ is isomorphic to a subgroup of 
$\aut(P)$, in particular, it is a cyclic group. This 
implies that either $N=L$, or $L=S_5$ and $N=A_5$. 
Consequently, 
\begin{equation}\label{eq:G}
G=\begin{cases}
P \times L & \mbox{if}~N=L, \\
P \rtimes L & \mbox{if}~L=S_5~\text{and}~N=A_5.
\end{cases} 
\end{equation}
Furthermore, in the second case the action of $L$ on $P$ by conjugation is defined by 
\begin{equation}\label{eq:LonP}
(z^i)^\lambda=\begin{cases} 
z^i & \mbox{if}~\lambda~\text{is even}, \\ 
z^{-i} & \mbox{if}~\lambda~\text{is odd},
\end{cases} 
\end{equation}
where $0 \le i \le p-1$ and $\lambda \in L=S_5$.
\medskip

Let $H=N_L(\la (1,2,3) \ra)$. Then 
$$
H=\begin{cases} 
\big\la (1,2,3), (1,2)(4,5) \big\ra \cong S_3 & \mbox{if}~L \cong A_5, \\
\big\la (1,2,3), (1,2), (4,5) \big\ra \cong S_3 \times \Z_2 & 
\mbox{if}~ L \cong S_5.
\end{cases}
$$   

We show next that $H$ is a vertex stabiliser in $G$. 
First, as $\G$ is a normal cover of $\overline{\P}$, the vertex stabilisers in $G$ are  
isomorphic to the vertex stabilisers in $\bar{G}$, and therefore, 
they are isomorphic to $H$. It follows immediately from \eqref{eq:G} and 
\eqref{eq:LonP} that all the elements of order $3$ in 
$G$ are contained in $L$, and form a single conjugacy class within $G$. 
In particular, there exists a vertex stabiliser in $G$ containing $(1,2,3)$, 
let this vertex stabiliser be denoted by $M$. 
Clearly, $M \le N_G(\la (1,2,3) \ra)$. Using \eqref{eq:G} and \eqref{eq:LonP},  
we obtain
$$
N_G(\la (1,2,3) \ra)=\begin{cases}
P \times H & \mbox{if}~N=L, \\
P \rtimes H & \mbox{if}~L=S_5~\text{and}~N=A_5.
\end{cases}
$$
If $N_G(\la (1,2,3) \ra)=P \times H$, then it is clear that $H$ is its 
only subgroup of $N_G(\la (1,2,3) \ra)$ isomorphic to $H$, so 
$M=H$, i.e., $H$ is indeed a vertex stabiliser.

Let $N_G(\la (1,2,3) \ra)=P \rtimes H$ and suppose that $M \ne H$.
Then $z^i\lambda \in M$ for some $1 \le i \le p-1$ and $\lambda \in H$. 
If $\lambda$ is even, then $p$ divides the order of $z^i\lambda$, which is impossible 
because $M \cong H$. Thus $\lambda$ must be odd. Using that $|M|=|H|$, it 
is not hard to show that 
for any $\mu \in H$, there is an element $z' \in P$ such that $z' \mu \in M$. 
It follows from this that $(H \cap A_5)  \le M$. If $i=2j$, then $(z^i\lambda)^{z^j}=\lambda$, and we have 
$$
M^{z^j}=
\big\la H \cap A_5, z^i\lambda \big\ra^{z^j}=
\big\la (H \cap A_5)^{z_j}, (z^i\lambda)^{z_j} \big\ra=
\big\la H \cap A_5, \lambda \big\ra=H.
$$
If $i=2j+1$, then 
one finds in the same way that $M^{\lambda z^{(p-1)/2-j}}=H$. 
In either case we obtain that $H$ is a vertex stabiliser. 
\medskip

The desired contradiction will arise after applying Lemma~\ref{cond} to $H$. 
Due to this lemma, there is an element $g \in G$ of order $5p$ satisfying all conditions in 
either part (1) or 
(2) of Lemma~\ref{cond}. W.l.o.g.~we may write $g=z \sigma$, where 
$\sigma$ is a $5$-cycle in $S_5$.  
\medskip

\noindent
{\it Case 1.} $N=L$. 
\medskip

By \eqref{eq:G}, $G=P \times L$. 
Using that $z \lambda=\lambda z$ for every $\lambda \in H$, it is easy to see that 
$g^{-1} \notin H g H$. Thus $HgH \ne Hg^{-1}H$, and so we have 
$|H|=3 |H  \cap H^g|=|H\la g \ra \cap HgH|$.
Since $H^z=H$, it follows that $H^g=H^\sigma$. Also, 
$$
H\la g \ra=\bigcup_{i=0}^{p-1} z^i H\la \sigma \ra~\text{and}~
HgH=z H\sigma H.
$$
Thus the equalities $|H|=3 |H  \cap H^g|=|H\la g \ra \cap HgH|$ 
reduce to 
$$
|H|=3 |H  \cap H^\sigma| = |H\la \sigma \ra \cap H \sigma H|.
$$ 
A computation with the computer package {\sc Magma}~\cite{BCP} shows that 
no $5$-cycle $\sigma$ satisfies these conditions. 
\medskip

\noindent
{\it Case~2.} $L=S_5$ and $N=A_5$. 
\medskip

Let $H_1=H \cap A_5$. Note that $H=H_1 \cup (4,5)H_1$.
By \eqref{eq:G}, $G=K \rtimes L$, and the action of $L$ on $P$ by conjugation is 
described in \eqref{eq:LonP}.

Then $HgH=Hg^{-1}H$ if and only if $g^{-1} \in HgH$, and so  
$g^{-1}=z^{-1}\sigma^{-1}=\lambda_1 z\sigma \lambda_2$ for some $\lambda_1, \lambda_2 \in H$. 
It can be seen that both $\lambda_1$ and $\lambda_2$ must be odd. 
Then $\lambda_i=(4,5)\lambda_i'$ for some $\lambda_i' \in H_1$, where $i=1,2$, 
and it holds $\sigma^{-1}=(4,5)\lambda_1' \sigma (4,5)\lambda_2'$. 
A computation with {\sc Magma}~\cite{BCP} verifies that such 
$\lambda_1'$ and $\lambda_2'$ exist for any $5$-cycle $\sigma$. 
Thus $HgH=Hg^{-1}H$, and so we have 
$|H|=6 |H  \cap H^g| = \frac{1}{2} |H\la g \ra \cap HgH|$. Then 
$H^g=H_1^g \cup ((4,5)H_1)^g=H_1^\sigma \cup z^{-2}((4,5)H_1)^\sigma$, 
hence the first equality reduces to 
\begin{equation}\label{eq:1st}
|H|=6  |H \cap H_1^\sigma|.
\end{equation}

In order to rewrite the second equality, observe first that 
$$
H \la g \ra=\bigcup_{i=0}^{p-1} H z^i \la \sigma \ra=
\bigcup_{i=0}^{p-1} \big( z^i  H_1 \la \sigma \ra \cup z^{-i}(4,5)H_1 
\la \sigma \ra \big)=\bigcup_{i=0}^{p-1} z^i H \la \sigma \ra.
$$
On the other hand, $H g H=(H_1 \cup (4,5)H_1)z\sigma H= 
z H_1 \sigma H \cup z^{-1} (4,5)H_1 \sigma H$. Thus  
$|H|=\frac{1}{2} |H\la g \ra \cap HgH|$ reduces to 
\begin{equation}\label{eq:2nd}
|H \la \sigma \ra \cap H_1 \sigma H|+
|H \la \sigma \ra \cap (4,5)H_1 \sigma H|=2 |H|.
\end{equation}
A computation with {\sc Magma}~\cite{BCP} shows that no 
$5$-cycle $\sigma$ satisfies both \eqref{eq:1st} and \eqref{eq:2nd}. 
This completes the proof of Theorem~\ref{main}.

\end{document}